\setlist[enumerate]{itemsep=0pt,label=$(\mathrm{\roman*})$, topsep=5pt}
\setlist[itemize]{itemsep=0pt, topsep=5pt, labelindent=\parindent,leftmargin=*}
\setlist[description]{itemsep=0pt, topsep=5pt, leftmargin=*}
\newtheorem{thm}{Theorem}[section]
\newtheorem{cor}[thm]{Corollary}
\newtheorem{lem}[thm]{Lemma}
\newtheorem{prop}[thm]{Proposition}
\theoremstyle{definition}
\newtheorem{dfn}[thm]{Definition}
\newtheorem{rem}[thm]{Remark}
\newtheorem*{claim}{Claim}
\newcommand\enclosebox[2]{%
  \BeforeBeginEnvironment{#1}{\begin{#2}}%
  \AfterEndEnvironment{#1}{\end{#2}}%
}
\newcommand{\ab}{\mathrm{ab}}
\newcommand{\A}{\mathscr{A}}
\newcommand{\As}{\ol{\A}}
\newcommand{\Ahat}{\widehat{A}}
\newcommand{\Abar}{\ol{A}}
\newcommand{\Cor}{\operatorname{Cor}}
\newcommand{\Cf}{\textit{cf.}\;}
\newcommand{\Coker}{\operatorname{Coker}}
\renewcommand{\div}{\mathrm{div}}
\newcommand{\CH}{\operatorname{CH}_0}
\newcommand{\Ehat}{\widehat{E}}
\newcommand{\F}{\mathbb{F}}
\newcommand{\Fk}{\F}
\newcommand{\FK}{\F_K}
\newcommand{\fin}{\mathrm{fin}}
\newcommand{\geo}{\mathrm{geo}}
\newcommand{\Gal}{\operatorname{Gal}}
\newcommand{\Gm}{\mathbb{G}_{m}}
\newcommand{\Gmhat}{\wh{\mathbb{G}}_m}
\renewcommand{\H}{\mathscr{H}}
\newcommand{\Hn}{\H_n}
\newcommand{\Hhat}{\widehat{\H}}
\newcommand{\Hhatn}{\Hhat_n}
\newcommand{\isomto}{\stackrel{\simeq}{\to}}
\renewcommand{\Im}{\operatorname{Im}}
\newcommand{\inj}{\hookrightarrow}
\newcommand{\ilim}{\varinjlim}
\newcommand{\Id}{\operatorname{id}}
\newcommand{\Jac}{\operatorname{Jac}}
\newcommand{\Jbar}{\ol{J}}
\newcommand{\Ker}{\operatorname{Ker}}
\newcommand{\Kt}{K^{\times}}
\newcommand{\kt}{k^{\times}}
\newcommand{\kx}{k(x)}
\newcommand{\kxt}{\kx^{\times}}
\newcommand{\kbar}{\ol{k}}
\newcommand{\kur}{k^{\ur}}
\newcommand{\K}{\mathscr{K}}
\newcommand{\Kur}{K^{\ur}}
\newcommand{\Kurt}{(\Kur)^{\times}}
\newcommand{\Khat}{\widehat{\K}}
\newcommand{\Kbar}{\overline{\K}}
\newcommand{\Khatn}{\Khat_n}
\newcommand{\Kbarn}{\Kbar_n}
\newcommand{\m}{\mathfrak{m}}
\newcommand{\mK}{\m_K}
\newcommand{\M}{\mathscr{M}}
\newcommand{\Mn}{\M_n}
\newcommand{\Mbar}{\overline{\M}}
\newcommand{\Mhat}{\widehat{\M}}
\newcommand{\Mbarn}{\Mbar_n}
\newcommand{\Mhatn}{\Mhat_n}
\newcommand{\N}{\mathscr{N}}
\newcommand{\ol}[1]{\overline{#1}}
\newcommand{\OK}{O_K}
\newcommand{\Ok}{O_k}
\newcommand{\OKt}{\OK^{\times}}
\newcommand{\opcit}{\textit{op.\,cit.}}
\newcommand{\onto}[1]{\stackrel{#1}{\to}}
\newcommand{\otimesZ}{\otimes_{\Z}}
\newcommand{\otimesM}{\otimes}
\newcommand{\Ord}{$(\mathbf{Ord})\,$}
\newcommand{\piab}{\pi_1^{\ab}}
\newcommand{\plim}{\varprojlim}
\newcommand{\Q}{\mathbb{Q}}
\newcommand{\Qp}{\Q_p}
\newcommand{\Ram}{$(\mathbf{Ram})\,$}
\newcommand{\Res}{\operatorname{Res}}
\newcommand{\Rat}{$(\mathbf{Rat})\,$}
\newcommand{\Ramp}{$(\mathbf{Ram}$'$)\,$}
\newcommand{\sn}{\smallskip\noindent}
\newcommand{\shat}{\widehat{s}}
\newcommand{\surj}{\twoheadrightarrow}
\newcommand{\ssm}{\smallsetminus}
\newcommand{\Spec}{\operatorname{Spec}}
\newcommand{\Spf}{\operatorname{Spf}}
\newcommand{\SKX}{SK_1(X)}
\newcommand{\Ubar}{\ol{U}}
\newcommand{\U}{\mathscr{U}}
\newcommand{\ur}{\mathrm{ur}}
\newcommand{\V}{\mathscr{V}}
\newcommand{\VX}{V(X)}
\newcommand{\VXdiv}{\VX_{\div}}
\newcommand{\VXfin}{\VX_{\mathrm{fin}} }
\newcommand{\wh}[1]{\widehat{#1}}
\newcommand{\X}{\mathscr{X}}
\newcommand{\Xs}{\ol{\X}}
\newcommand{\Z}{\mathbb{Z}}
\newcommand{\Zhat}{\wh{\Z}}
\title{Galois symbol maps for abelian varieties over a $p$-adic field}
\author{Toshiro Hiranouchi}
\begin{document}
\pagenumbering{arabic}
\maketitle

\begin{abstract}
We study the Galois symbol map associated to 
the multiplicative group and an abelian variety 
which has good ordinary reduction 
over a $p$-adic field. 
As a byproduct, 
one can calculate 
the ``class group'' in the view of the class field theory 
for curves over a $p$-adic field. 
\end{abstract}

\section{Introduction}
\label{sec:intro}
Let $k$ be a \textbf{$p$-adic field}, that is, a finite extension of $\Qp$ and 
its residue field is denoted by $\Fk$.  
An objective of the class field theory of 
a (projective smooth and geometrically connected) 
curve $X$ over $k$ with function field $k(X)$ 
(\cite{Blo81}, \cite{Sai85a}) 
is to describe 
the abelian fundamental group $\piab(X)$ by 
an abelian group 
\[
  SK_1(X) = \Coker\left(K_2(k(X)) \to \bigoplus_{x\in X_0} \kxt  \right)
\]
(for the precise definition of $\SKX$, see \eqref{def:SK1})
through 
the reciprocity map 
\[
\rho:\SKX \to \piab(X) 
\] 
(\Cf Thm.\ \ref{thm:cft}). 
%
The ``geometric part'' 
$\piab(X)^{\geo} := \Ker\left(\piab(X) \to \piab(\Spec(k))\right)$ 
is approximated by 
\[
\VX = \Ker\left(\partial:\SKX \to k^{\times}\right),
\] 
where $\partial$ is defined by the tame symbols. 
It is known that the induced map $\tau: \VX \to \piab(X)^{\geo}$ from $\rho$  
has the finite image 
and the kernel is the maximal divisible subgroup $\VXdiv$ of $\VX$. 
In particular, we have a decomposition 
\[
	\VX = \VXdiv \oplus \VXfin,
\] 
for some finite group $\VXfin$. 
Now we further assume that 
$X$ has \emph{good reduction and also $X(k)\neq \emptyset$}. 
Recall that $X$ said to have \textbf{good reduction} if 
the special fiber $\Xs := \X\otimes_{\Ok}\Fk$ of the regular model $\X$ over $\Ok$ with $\X\otimes_{\Ok} k \simeq X$ 
is also a smooth curve over $\Fk$. 
The Jacobian variety $\Jac(\X)$ associated to $\X$ has the generic fiber $J = \Jac(X)$ and the special fiber $\Jbar  = \Jac(\Xs)$. 
	In particular, $J$ has good reduction. 
This group $\VXfin$ is related to 
the $G_k$-coinvariant part of the Tate module 
$T(J) = \plim_{m}J[m]$
as 
\[
	\VXfin \isomto T(J)_{G_k}.
\]
In this setting, 
the prime to $p$-part of $\VXfin$ is known well 
as $\VX/m \simeq \Jbar(\Fk)/m$ 
for any $m$ prime to $p$ (by S.~Bloch, see Prop.~\ref{prop:Bloch}). 
For the $p$-part, 
only the finiteness is proved (\cite{Blo81}, Prop.~2.4).
The aim of this note is \emph{to study more explicitly 
on this $\VXfin$ by determining 
the group structure} 
under the following conditions: 
\begin{itemize}
	\item[] \Rat\ $J[p]\subset J(k)$,  
	\item[] \Ord\ $\Jbar$ has ordinary reduction, and 
	\item[] \Ram\ $k(\mu_{p^{N+1}})/k$ is 
	a non-trivial totally ramified extension, where 
	 $\mu_{p^{N+1}}$ is the group of $p^{N+1}$-th roots of unity,  and 
	 $N = \max\set{ n | J[p^n] \subset J(k)}$.
\end{itemize}
The main result of this note is the following: 

\begin{thm}[Cor.~\ref{cor:main}]
	\label{thm:intro}
	Under the conditions 
	\Rat\!,\ \Ord\!,\ and \Ram\ for $J = \Jac(X)$ as above, we have 
	\[
	\VXfin \simeq (\Z/p^{N})^{\oplus g} \oplus \Jbar(\Fk), 
	\]
	where $g = \dim J$. 
\end{thm}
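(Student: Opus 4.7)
The starting point is the isomorphism $\VXfin \isomto T(J)_{G_k}$ recorded in the introduction, which reduces the theorem to computing the $G_k$-coinvariants of the full Tate module. Decomposing $T(J) = T_p(J) \oplus T^{(p)}(J)$ into $p$-part and prime-to-$p$ part, the prime-to-$p$ piece is essentially Bloch's result: good reduction makes the $G_k$-action on $T^{(p)}(J)$ factor through $G_{\F_k}$, and since Frobenius has no eigenvalue equal to $1$ on $V_\ell(\ol J)$ (Weil), a snake lemma argument gives $T^{(p)}(J)_{G_k} \simeq \ol J(\F_k)^{(p')}$, the prime-to-$p$ part of $\ol J(\F_k)$. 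It remains to identify the $p$-part $T_p(J)_{G_k}$ with $(\Z/p^N)^{\oplus g} \oplus \ol J(\F_k)\{p\}$.

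For this I would use the connected-\'etale sequence of the $p$-divisible group of the N\'eron model, which yields an exact sequence of $G_k$-modules
\[
0 \to T_p(\wh E) \to T_p(J) \to T_p(\ol J) \to 0,
\]
where $\wh E$ is the formal group of $J$; by \Ord\ both outer terms are free $\Z_p$-modules of rank $g$. The \'etale end $T_p(\ol J)$ is unramified, so its $G_k$-coinvariants coincide with its $G_{\F_k}$-coinvariants and, by the same Weil/snake argument, equal $\ol J(\F_k)\{p\}$. For the formal end, \Ord\ makes $\wh E$ of multiplicative type, and \Rat\ (via the Weil pairing and Cartier duality) pins down its Galois twist, yielding $T_p(\wh E) \simeq \Z_p(1)^{\oplus g}$ as $G_k$-modules. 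Consequently $T_p(\wh E)_{G_k} \simeq (\Z_p/I)^{\oplus g}$, where $I \subset \Zp$ is the ideal generated by $\chi(g) - 1$ for $g \in G_k$ and $\chi$ is the cyclotomic character. The definition of $N$ gives $\chi \equiv 1 \pmod{p^N}$ but not modulo $p^{N+1}$, and \Ram\ forces this failure to occur on inertia; hence $I = p^N \Zp$ and $T_p(\wh E)_{G_k} \simeq (\Z/p^N)^{\oplus g}$.

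Combining these through the right exact sequence
\[
T_p(\wh E)_{G_k} \to T_p(J)_{G_k} \to T_p(\ol J)_{G_k} \to 0
\]
exhibits $T_p(J)_{G_k}$ as an extension of $\ol J(\F_k)\{p\}$ by a quotient of $(\Z/p^N)^{\oplus g}$. The main obstacle is to upgrade this to a split short exact sequence so as to obtain the claimed direct sum; I expect this to follow either from a careful analysis of the connecting homomorphism $H_1(G_k, T_p(\ol J)) \to T_p(\wh E)_{G_k}$ using local duality, or from an appropriate $G_k$-equivariant section of the connected-\'etale sequence arising under \Ord\ from the structure of the canonical lift. Recombining the $p$-part and the prime-to-$p$ part then yields the claimed decomposition $\VXfin \simeq (\Z/p^N)^{\oplus g} \oplus \ol J(\F_k)$.
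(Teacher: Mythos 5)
Your strategy---computing $T(J)_{G_k}$ directly via the connected--\'etale sequence of the $p$-divisible group---is genuinely different from the paper's, which never touches Tate-module coinvariants in the actual proof: the paper works with the Somekawa $K$-group $K(k;\Gm,J)\simeq V(X)$, decomposes it through the exact sequence of Mackey functors $0\to\wh{J}\to J\to \ol{J}\to 0$, proves the Galois symbol map $(\Gm/p^n\otimes J/p^n)(k)\to H^2(k,\mu_{p^n}\otimes J[p^n])$ is bijective (Thm.~\ref{thm:GmAV}), and lets \Rat and \Ram enter through the explicit image of the Kummer map (Prop.~\ref{prop:Tak}) and a Hilbert-symbol computation (Prop.~\ref{prop:Khat}). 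Your prime-to-$p$ argument and your identification of the two outer coinvariant groups are essentially sound, modulo one overstatement: \Rat does not force $T_p(\wh J)\simeq \Zp(1)^{\oplus g}$ as $G_k$-modules, only $T_p(\wh J)\simeq \Zp(1)\otimes M$ with $M$ unramified and Frobenius $\equiv 1 \pmod p$ on $M$; your coinvariant count still survives because $J[p^N]\subset J(k)$ forces Frobenius $\equiv 1 \pmod{p^N}$ on $M$ while \Ram makes inertia kill exactly $p^N\Zp$, but the intermediate claim as written is unjustified.

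The genuine gap is exactly where you flag ``the main obstacle.'' From the right-exact sequence $T_p(\wh J)_{G_k}\to T_p(J)_{G_k}\to T_p(\ol J)_{G_k}\to 0$ you need two things, neither of which you establish: injectivity of the first map (equivalently, vanishing of the connecting map $H_1(G_k,T_p(\ol J))\to T_p(\wh J)_{G_k}$) and a splitting of the resulting short exact sequence. Of your two proposed fixes, the second would fail in general: a $G_k$-equivariant section of the connected--\'etale sequence exists essentially only when $J$ is a canonical (Serre--Tate) lift, which is not assumed. The first (analysis of the connecting map via local duality) is plausible but is precisely the hard content of the theorem; in the paper it is what injectivity of the Galois symbol map buys---for $n\le N$ one gets $K(k;\Gm,J)/p^n\inj H^2(k,\mu_{p^n}\otimes J[p^n])\simeq(\Z/p^n)^{\oplus 2g}$ with the subgroup $\Khatn\simeq(\Z/p^n)^{\oplus g}$ an injective $\Z/p^n$-module, whence both the injectivity and the splitting at once, and an induction handles $n>N$. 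Until you supply an argument of comparable strength on the coinvariant side, the proposal does not yield the direct-sum decomposition.
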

Since the divisible part $\VXdiv$ is the kernel of the 
reciprocity map $\tau:V(X)\to \piab(X)^{\geo}$ 
and $\tau$ is surjective in the case where $X$ has good reduction, 
we obtain the structure of the geometric part of the fundamental group as 
\[
\piab(X)^{\geo}\simeq (\Z/p^{N})^{\oplus g} \oplus \overline{J}(\Fk).
\]
The key tool to compute the group $\VX$ 
is the so called \textbf{Galois symbol map}  
of the following form 
\[	
s_m: K(k;\Gm,J)/m \to H^2(k,\mu_{m}\otimes J[m]), 
\]
where $K(k;\Gm,J)$ is the Somekawa $K$-group 
which is isomorphic to $\VX$ 
and the map $s_m$ is constructed by 
a similar way as the Galois symbol map 
on the Milnor $K$-group 
$K_2(k)/m\to H^2(k,\mu_m^{\otimes 2})$ 
(for the precise construction of the map $s_m$, 
see Definition \ref{def:symbol}).

Finally, we give some remarks on the conditions above. 
The condition \Ram is technical. 
For example, let us consider an elliptic curve $X = E$ over $k = \Qp(\mu_{p^M})$ for some $M\ge 1$ assuming that $E$ 
has good ordinary reduction as in \Ord. 
Note that we have $\Jac(E) = E$. 
From the Weil pairing, we have an inequality $N\le M$ in general (\Cf \cite{Sil106}, Chap.~III, Cor.~8.1.1), 
where $N = \max \set{n | E[p^n] \subset  E(k)}$. 
Thus, if $E[p^M] \subset E(k)$ then 
$N = M$. Consequently, the condition \Ram above automatically holds. 
For the case $M=1$ 
(from the above argument, \Rat implies \Ram\!)
and $p=3$, 
by computing SAGE \cite{sage}, 
there are 683 elliptic curve $E_0$ over $\Q$ 
with good ordinary reduction at $p = 3$, 
$\ol{E_0}(\F_3)[3]\neq 0$ and the conductor $< 1000$. 
Among them, 269 curves satisfy 
$E[3]\subset E(k)$ and hence the condition \Ram\!, 
where $E = E_0 \otimes_{\Q}k$ is the base change of $E$ to $k=\Q_3(\mu_3)$. 

\subsection*{Contents}
The contents of this note is the following:
\begin{itemize}[label=$\circ$]
\item Section \ref{sec:cft}: 
After recalling the definition of the Mackey functor and that of the product for two Mackey functors,   
	we review the class field theory for curves over $p$-adic fields following Bloch \cite{Blo81} and Saito \cite{Sai85a}.
\item Section \ref{sec:Kummer}: 
We determine the image of the Kummer map $A(k)/p^n \to H^1(k,A[p^n])$ (\Cf \eqref{def:Kummer}) associated to an abelian variety $A$ with good ordinary reduction over $k$ (Prop.~\ref{prop:Tak}). 
This extends the main theorem in \cite{Tak11} for an elliptic curve. The proof is essentially same as in \opcit\ 
\item Section \ref{sec:symbol}: We show that the Galois symbol map (defined in Def.~\ref{def:symbol}) 
	associated to the multiplicative group $\Gm$ and an abelian variety $A$ over a $p$-adic field is bijective (Thm.~\ref{thm:GmAV}), and 
	determine the group structure of the Somekawa $K$-group $K(k;\Gm,A)$ (Thm.~\ref{thm:main}). 
	Using the isomorphism $K(k;\Gm,J) \simeq V(X)$ (\Cf \eqref{eq:VX}) for a curve $X$ as in Theorem \ref{thm:intro} and $J = \Jac(X)$, 
	we obtain the structure of $\VXfin$. 
\end{itemize}

\subsection*{Notation}
Throughout this note,  
we use the following notation 
\begin{itemize}
	\item $k$: a finite extension of $\Qp$, 
	\item $\Fk$: the residue field of $k$, and  
	\item $G_k = \Gal(\kbar/k)$: the absolute Galois group of $k$.
\end{itemize}
For a finite extension $K/k$, we define 
\begin{itemize}
	\item $O_K$: the valuation ring of $K$, 
	\item $\FK$: the residue field of $K$, 
	\item $U_K = \OK^{\times}$: the unit group, and 
	\item $U_K^i = 1 + \mathfrak{m}_K^i$: the higher unit group. 
\end{itemize}
For an abelian variety over $k$, we define 
\begin{itemize}
	\item $\pi:A\to \Abar$: the reduction map, 
	\item $\Ahat$\,: the formal group law of $A$  (\Cf \cite{HS00}, Sect.~C.2), and 
	\item $A_1 := \Ker(\pi:A\to \Abar)$\,: the kernel of the reduction map $\pi$.
\end{itemize}
For an abelian group $G$ and $m\in \Z_{\ge 1}$,  
we write $G[m]$ and $G/m$ for the kernel and cokernel 
of the multiplication by $m$ on $G$ respectively. 

\subsection*{Acknowledgments}
I would like to thank Professor Evangelia Gazaki and Dr.~Isable Leal for their 
valuable comments and encouragement.
This work was supported by KAKENHI 25800019.

\section{Class field theory}
\label{sec:cft}

\subsection*{Mackey functors}

\begin{dfn}[\Cf \cite{RS00}, Sect.~3]
\label{def:Mack}
    A \textbf{Mackey functor} $\M$ (over $k$)  (or a \textbf{$G_k$-modulation} in the sense of \cite{NSW08},  Def.~1.5.10) 
    is a contravariant 
    functor from the category of \'etale schemes over $k$ 
    to the category of abelian groups 
    equipped with a covariant structure 
    for finite morphisms 
    such that 
    $\M(X_1 \sqcup X_2)  = \M(X_1) \oplus \M(X_2)$ 
    and if 
    \[
    \xymatrix@C=15mm{
      X' \ar[d]_{f'}\ar[r]^{g'} & X \ar[d]^{f} \\
      Y' \ar[r]^{g} & Y
    }
    \]
    is a Cartesian diagram, then the induced diagram 
    \[
    \xymatrix@C=15mm{
      \M(X') \ar[r]^{{g'}_{\ast}} & \M(X)\\
      \M(Y') \ar[u]^{{f'}^{\ast}} \ar[r]^{{g}_{\ast}} & \M(Y)\ar[u]_{f^{\ast}}
    }
    \]
    commutes. 
\end{dfn}

For a Mackey functor $\M$, 
we denote by $\M(K)$ 
its value $\M(\Spec(K))$  
for a field extension $K$ of $k$.
For any finite extensions $k\subset K \subset L$, 
the induced homomorphisms 
from the canonical map $j:\Spec(L) \to \Spec (K)$
are denoted by 
\[
N_{L/K}:=j_{\ast}:\M(L) \to \M(K),\quad \mbox{and}\quad  
	 \Res_{L/K}:=j^{\ast}:\M(K)\to \M(L).
\]
The category of Mackey functors over $k$ forms 
an abelian category with the following tensor product: 

\begin{dfn}[\Cf \cite{Kahn92}]
	\label{def:otimesM}
For Mackey functors $\M$ and $\N$, 
their \textbf{Mackey product} 
$\M\otimesM \N$ 
is defined as follows: 
For any finite field extension $k'/k$, 
\begin{equation}
	\label{eq:otimesM}
(\M\otimesM \N ) (k') := 
\left.\left(\bigoplus_{K/k':\,\mathrm{finite}} \M(K) \otimesZ \N(K)\right)\middle/\ \textbf{(PF)},\right. 
\end{equation}
where \textbf{(PF)} stands for the subgroup generated 
by elements of the following form: 
\begin{itemize}
	\item [\textbf{(PF)}]
For finite field extensions 
$k' \subset K \subset L$, 
\begin{align*}
&N_{L/K}(x) \otimes y - x \otimes \Res_{L/K}(y) \quad 
\mbox{for $x \in \M(L)$ and $y \in \N(K)$, and}\\
&x \otimes \Res_{L/K}(y) - N_{L/K}(x) \otimes y \quad 
\mbox{for $x \in \M(K)$ and $y \in \N(L)$}.
\end{align*}
\end{itemize}
\end{dfn}

For the Mackey product  
$\M\otimesM \N$,  
we write $\set{x,y}_{K/k'}$ 
for the image of 
$x \otimes y \in 
\M(K) \otimesZ \N(K)$ in the product 
$(\M\otimesM \N)(k')$. 
For any finite field extension $k'/k$, 
the push-forward 
\begin{equation}
	\label{eq:norm2}
  N_{k'/k} = j_{\ast}: (\M \otimesM \N )(k') \longrightarrow 
(\M\otimesM \N) (k)
\end{equation}
is given by 
$N_{k'/k}(\set{x,y}_{K/k'}) = \set{x,y}_{K/k}$. 
For each $m \in \Z_{\ge 1}$, we define a Mackey functor  $\M/m$ 
by 
\begin{equation}
	\label{def:quot}
(\M/m) (K) := \M(K)/m
\end{equation}
for any finite extension $K/k$. 
We have 
\[
(\M/m \otimesM \N/m )(k) \simeq(\M\otimesM \N)(k)/m \quad ( =  ((\M\otimesM \N)/m)(k) \   \mbox{in the sense of \eqref{def:quot}}).
\]

Every $G_k$-module $M$ defines a Mackey functor 
defined by the fixed sub module $M(K) := M^{\Gal(\kbar/K))}$ denoted by $M$. Conversely, 
assume a Mackey functor $\M$ satisfies the  \textbf{Galois descent},  
meaning that,  for every finite Galois extensions $L/K$, 
the restriction 
\[
\Res_{L/K}: \M(K) \isomto \M(L)^{\Gal(L/K)}
\]
is an isomorphism. This Mackey functor $\M$ gives 
a $G_k$-module $\M = \ilim_{K/k}\M(K)$ (\Cf \cite{NSW08}, Chap.~1, Sect. 5, Ex.~1) denoted again by $\M$. 
For any $m\in \Z_{\ge 1}$, 
the connecting homomorphism associated to the short exact sequence 
$0\to \M[m]\to \M \onto{m} \M \to 0$ as $G_k$-modules gives 
\begin{equation}
\label{def:Kummer}
	\delta_{\M}: \M(K)/m \inj H^1(K,\M[m])
\end{equation}
which is often called the \textbf{Kummer map}. 

\begin{dfn}[\Cf \cite{Som90}, Prop.~1.5]
\label{def:symbol}
For Mackey functors $\M$ and $\N$ with Galois descent,  
the \textbf{Galois symbol map} 
\begin{equation}
	\label{def:sM}
s_m^M:(\M\otimesM \N)(k)/m \to H^2(k,\M[m]\otimes \N[m])
\end{equation}
is defined by the cup product and the corestriction 
as follows:  
 \[
s_m^M(\set{x,y}_{K/k}) = \mathrm{Cor}_{K/k}\left(\delta_{\M}(x)\cup \delta_{\N}(y)\right).
\]
\end{dfn}

\subsection*{Somekawa $K$-group}
For two semi-abelian varieties $G_1$ and $G_2$ over $k$, 
the $G_k$-modules $G_1(\kbar)$  and $G_2(\kbar)$ 
define a Mackey functors 
with Galois descent which we denote also by $G_1$ and $G_2$. 
The \textbf{Somekawa $K$-group} $K(k;G_1, G_2)$
is a quotient of  
$(G_1\otimesM G_2)(k)$ (for the definition, 
see \cite{Som90}, \cite{RS00}). 
The Galois symbol map 
$s_m^M:(G_1\otimesM G_2)(k)/m\to H^2(k,G_1[m]\otimes G_2[m])$ (Def.~\ref{def:symbol}) 
factors through $K(k;G_1,G_2)$ and the 
induced map 
\begin{equation}
	\label{def:s}
	s_m:K(k;G_1, G_2)/m \to H^2(k,G_1[m]\otimes G_2[m])
\end{equation}
is also called the \textbf{Galois symbol map}. 
Somekawa presented a ``conjecture''  
in which the map $s_m$ is injective 
(for arbitrary field). 
For the case $G_1 = G_2 = \Gm$, 
as $K(k;\Gm,\Gm) \simeq K_2^{M}(k)$  
the conjecture holds by 
the Merkurjev-Suslin theorem (\cite{MS82}). 
Although it holds in some special cases (\cite{Yam05}, \cite{Yam09}, and \cite{RS00}),  
Spie\ss\ and Yamazaki disproved this for some tori (\cite{SY09}, Prop.\ 7).

\subsection*{Class field theory}
Following \cite{Blo81}, \cite{Sai85a}, we 
recall the class field theory for a curve over $k$. 
Let $X$ be a projective smooth and geometrically connected curve over $k$. 
Define 
\begin{itemize}
	\item $X_0$: the set of closed points in $X$,  
	\item $k(X)$: the function field of $X$, 
	\item $k(x)$: the residue field at $x\in X_0$, and 
	\item $k(X)_x$: the completion of $k(X)$ at $x\in X_0$.
\end{itemize}
We define   
\begin{equation}
	\label{def:SK1}
  SK_1(X) := \Coker\left(\partial:K_2(k(X)) \to \bigoplus_{x\in X_0} \kxt  \right),
\end{equation}
where the map $\partial$  is given by 
the direct sum of the boundary map $K_2(k(X)_x)\to K_1(k(x)) = \kxt$ for $x\in X_0$. 
Note that the residue field $\kx$ is a finite extension field of $k$ so that $\kx$ is also a $p$-adic field. 
The reciprocity maps $\kxt \to \piab(x)$ 
of the local class field theory  
of $\kx$ for $x\in X_0$ 
induce the \textbf{reciprocity map} 
\[
\rho:SK_1(X) \to \piab(X). 
\]
The map $\rho$ is compatible with 
the reciprocity map $\rho_k:\kt \to G_k^{\ab} = \Gal(k^{\ab}/k) =\pi_1^{\ab}(\Spec(k))$ of the base field $k$ 
as in the following commutative diagram: 
\begin{equation}
\label{eq:rho}
	\vcenter{
	\xymatrix{
	0 \ar[r]& V(X)\ar[d]^{\tau} \ar[r] &SK_1(X) \ar[r]^-{N}\ar[d]^{\rho} & \kt \ar[r]\ar[d]^{\rho_k} & 0\,\\
	0  \ar[r]& \piab(X)^{\geo} \ar[r]& \piab(X)\ar[r]^{\varphi} & G_k^{\ab}\ar[r] & 0,
	}}
\end{equation}
where 
the map $\varphi$ is induced from the structure map $X\to \Spec(k)$, 
$N$ is 
induced from the norm maps $\kxt \to \kt$ for each $x\in X_0$, 
and the groups $V(X)$ and $\piab(X)^{\geo}$ are defined by 
the exactness. 
The main theorem of the class field theory for $X$ is the following:  

\begin{thm}[\cite{Blo81}, \cite{Sai85a}]
\label{thm:cft}
\begin{enumerate}
	\item $\piab(X)/\ol{\Im(\rho)} \simeq \Zhat^{\oplus r}$ for some $r\ge 0$, 
	where $\ol{\Im(\rho)}$ is the topological closure of the image $\Im(\rho)$ in $\piab(X)$.
	\item $\Ker(\rho) = SK_1(X)_{\div}$, 
	where $SK_1(X)_{\div}$ is the maximal divisible subgroup of $SK_1(X)$. 
	\item $\Ker(\tau) = \VXdiv$, 
	where $\VXdiv$ is the maximal divisible subgroup of $V(X)$. 
	\item $\Im(\tau)$ is finite.
\end{enumerate}
\end{thm}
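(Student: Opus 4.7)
My strategy is to prove (iv) first, deduce (iii) from it, and then obtain (ii) and (i) by a snake-lemma chase in diagram \eqref{eq:rho} combined with local class field theory for $k$. The inputs from the base field are that $\rho_k$ is injective with dense image in $G_k^{\ab}$, and that $\kt$ has trivial maximal divisible subgroup.

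For (iv), I would use the identification $V(X)\simeq K(k;\Gm,J)$ with $J=\Jac(X)$ (the isomorphism promised later in the paper), and analyze each quotient $V(X)/m$ via the Galois symbol map
\[
s_m:K(k;\Gm,J)/m\to H^2(k,\mu_m\otimes J[m]).
\]
Since $H^2(k,M)$ is finite for every finite $G_k$-module $M$ over a $p$-adic field (local Tate duality), the image of each $V(X)/m$ is controlled by a finite group. The prime-to-$p$ contribution follows from Bloch's isomorphism $V(X)/m\simeq \Jbar(\Fk)/m$, while the $p$-primary part — which is the delicate piece — is \cite{Blo81}, Proposition~2.4, based on compactness of $J(k)$ as a $p$-adic Lie group together with uniform finiteness of $J(k)/p^n$. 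Granting (iv), the inclusion $V(X)_{\div}\subseteq \Ker(\tau)$ in (iii) is formal since a finite abelian group has no nonzero divisible subgroup; the reverse inclusion reduces to showing $V(X)/V(X)_{\div}\hookrightarrow \Im(\tau)$, which I would verify mod every $m$ by combining Kummer theory for $J$ with a comparison against $s_m$.

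Given (iii), a snake-lemma chase on \eqref{eq:rho} — using that $\Ker(\rho_k)=0$ and that $\kt$ has trivial maximal divisible subgroup, so $\SKX_{\div} = V(X)_{\div}$ — yields $\Ker(\rho) = V(X)_{\div} = \SKX_{\div}$, which is (ii). For (i), the density of $\Im(\rho_k)$ in $G_k^{\ab}$ forces $\varphi(\overline{\Im(\rho)})=G_k^{\ab}$, so $\piab(X)/\overline{\Im(\rho)}$ is a quotient of $\piab(X)^{\geo}/\Im(\tau)$ (the closure of the finite set $\Im(\tau)$ is itself); describing $\piab(X)^{\geo}$ through the Tate module $T(J)$ and isolating the unramified contribution then identifies this quotient as $\Zhat^{\oplus r}$. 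The genuine obstacle is (iv), and in particular its $p$-primary piece, which is the technical heart of \cite{Blo81}; everything else is formal manipulation once (iv) is in hand.
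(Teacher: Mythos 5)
The paper does not prove Theorem \ref{thm:cft}: it is quoted as background directly from \cite{Blo81} and \cite{Sai85a}, so there is no in-paper argument to compare yours against. Judged on its own, your outline reproduces the broad architecture of Bloch--Saito but leaves the genuinely hard points unestablished. For (iv): finiteness of each quotient $V(X)/m$ does not imply finiteness of $\Im(\tau)$ --- a group like $\Zhat$ has every quotient by $m$ finite while being infinite --- so controlling $V(X)/m$ by the finite groups $H^2(k,\mu_m\otimes J[m])$ is not enough; one needs that $\tau(V(X))$ is killed by a single integer, and for the $p$-primary part you simply cite \cite{Blo81}, Prop.~2.4, which is one of the results the theorem is being quoted for in the first place. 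For (iii), the reverse inclusion $\Ker(\tau)\subseteq \VXdiv$ is precisely the deep injectivity statement (injectivity of $V(X)/m\to \piab(X)^{\geo}/m$, i.e.\ of the top row of \eqref{diag:sm}); the phrase ``verify mod every $m$ by combining Kummer theory for $J$ with a comparison against $s_m$'' names this statement rather than proving it, and one would also need to know in advance that $V(X)/\VXdiv$ has finite exponent for a mod-$m$ check to close the argument.

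There is also a hypothesis issue: the identifications $V(X)\simeq K(k;\Gm,J)$ and $\piab(X)^{\geo}\simeq T(J)_{G_k}$ on which your argument leans are introduced in the paper only \emph{after} Theorem \ref{thm:cft}, under the additional assumption $X(k)\neq\emptyset$; the theorem itself is stated for an arbitrary projective smooth geometrically connected curve over $k$. The formal portions of your plan are sound: granting (iv), the vanishing of $\tau$ on $\VXdiv$ is immediate since a finite group has no nonzero divisible subgroup, and granting (iii), the chase in \eqref{eq:rho} using the injectivity of $\rho_k$ and the triviality of the divisible part of $\kt$ does give (ii). But as it stands the proposal is an outline whose load-bearing steps are either deferred to the cited sources or asserted without argument.
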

Note that the invariant $r$ above is determined by 
the special fiber of the N\'eron model of the Jacobian variety $J = \Jac(X)$. 
In particular, we have $r=0$ if $X$ has good reduction. 
From the above theorem, the group $\VX$ has a decomposition 
\[
V(X) =V(X)_{\div}\oplus V(X)_{\fin},
\]
where the reduced part $V(X)_{\fin}$ is a finite subgroup. 

From now on, we \emph{assume} that $X(k)\neq \emptyset$. 
The geometric fundamental group 
$\piab(X)^{\geo}$ is written 
by the Tate module 
$T(J) = \plim_{m}J[m]$ of the Jacobian variety $J= \Jac(X)$ as 
\[
\piab(X)^{\geo} \simeq T(J)_{G_k}, 
\] 
where 
$T(J)_{G_k}$ is the $G_k$-coinvariant quotient of $T(J)$ (\cite{Sai85b}, Chap.~II, Lem.~3.2). 
On the other hand, 
the group $V(X)$ is written by the Somekawa $K$-group as  
\begin{equation}
	\label{eq:VX}
	 V(X) \simeq  K(k; \Gm, J)
\end{equation} 
(\cite{Som90}, Thm.~2.1, \cite{RS00}, Rem.~2.4.2 (c)). 
The reciprocity map $\tau$ in the diagram \eqref{eq:rho}
coincides with the Galois symbol map 
associated with $\Gm$ and $J$ as 
in the following commutative diagram: 
\begin{equation}
	\label{diag:sm}
	\vcenter{\xymatrix{
	V(X)/m \ar[d]^{\simeq} \ar@{^{(}->}[r]^-{\tau} & \ar[d]^{\simeq} \piab(X)^{\geo}/m\\
	K(k;\Gm,J)/m\ar[r]^-{s_m} & H^2(k,\mu_{m}\otimes J[m]).
	}}
\end{equation}
Here, the right vertical map is induced from 
$H^2(k,\mu_m\otimes J[m]) \simeq J[m]_{G_k}$ 
which is given by the local Tate duality theorem. 
From the construction, 
the composition $V(X)/m \simeq K(k;\Gm,J)/m \onto{s_m} H^2(k,\mu_m\otimes J[m])$ 
is compatible with the map 
$V(X)\to H^2(k,\mu_m\otimes J[m])$ 
given by Bloch (\cite{Blo81}, Thm.~1.14). 
From the diagram above, we have $ V(X)/m \simeq  \Im(s_m)$. 
For the prime to $p$-part of $\VX$, we have the following proposition: 

\begin{prop}[\cite{Blo81}, Prop.~2.29]
\label{prop:Bloch}
	Assume that $X$ has good reduction and $X(k)\neq \emptyset$. 
	Then, for any $m\in \Z_{\ge 1}$ prime to $p$, 
 we have $\VX/m \simeq \Jbar(\Fk)/m$, 
 where $\Jbar$ is the reduction of $J$. 
\end{prop}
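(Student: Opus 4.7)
The plan is to compute the image $\Im(s_m) = V(X)/m$ using the commutative diagram (\ref{diag:sm}), splitting the task into (a) identifying the ambient group $H^2(k,\mu_m\otimes J[m])$ with $\Jbar(\Fk)/m$, and (b) establishing surjectivity of $s_m$ in the prime-to-$p$ case.

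For (a): since $J$ has good reduction and $m$ is prime to $p$, the $G_k$-module $J[m]$ is unramified and is naturally isomorphic to $\Jbar[m]$ via reduction. The Tate-duality identification mentioned after (\ref{diag:sm}) gives $H^2(k,\mu_m\otimes J[m])\simeq J[m]_{G_k}=\Jbar[m]_{G_{\Fk}}$. Since $G_{\Fk}\simeq\Zhat$ is topologically cyclic, we have $\Jbar[m]_{G_{\Fk}}\simeq H^1(\Fk,\Jbar[m])$. Lang's theorem $H^1(\Fk,\Jbar)=0$ applied to the Kummer sequence $0\to\Jbar[m]\to\Jbar\onto{m}\Jbar\to 0$ over $\Fk$ then produces $\Jbar(\Fk)/m\isomto H^1(\Fk,\Jbar[m])$. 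Composing these isomorphisms yields the desired identification.

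For (b): the key observation is that $H^2_{\ur}(k,\mu_m\otimes J[m])=H^2(\Fk,\mu_m\otimes\Jbar[m])=0$ since $\Fk$ has cohomological dimension one, so any cup product of two unramified $H^1$-classes vanishes in $H^2$. Because $m$ is prime to $p$, the kernel-of-reduction subgroup $J_1(k)$ is $m$-divisible (its formal group is a pro-$p$ group), so $J(k)/m\isomto\Jbar(\Fk)/m$, and the Kummer map $\delta_J\colon J(k)/m\inj H^1(k,J[m])$ has image the unramified subgroup. Decomposing $k^{\times}/m\simeq\Fk^{\times}/m\oplus\Z/m$ via a uniformizer $\pi$, the unramified piece $\Fk^{\times}/m$ cups trivially with $\delta_J(P)$, so only the $\pi$-component contributes to $s_m$. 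A direct tame-symbol computation identifies $\delta_{\Gm}(\pi)\cup\delta_J(P)$ with $\bar P$ under the chain of isomorphisms in (a). Since $J(k)\surj\Jbar(\Fk)$ by Hensel's lemma, the symbols $\set{\pi,P}_{k/k}$ already realize every element of $\Jbar(\Fk)/m$, proving surjectivity.

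The main technical obstacle is the last point in (b): the explicit matching of $\delta_{\Gm}(\pi)\cup\delta_J(P)$ with the reduction $\bar P$ requires tracing carefully through Tate duality, the unramifiedness isomorphism $J[m]\simeq\Jbar[m]$, and the connecting map of the Kummer sequence on $\Fk$. Although essentially a diagram chase, the bookkeeping of compatible identifications is the delicate part. A route that avoids this, closer to Bloch's original argument, is to factor $s_m$ through a tame symbol to the special fiber $\Xs$ and invoke the class field theory for smooth projective curves over finite fields, for which the analogue of $V$ is identified classically with $\Jbar(\Fk)$.
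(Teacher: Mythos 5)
Your proposal is correct, and it reaches the statement by a genuinely different route from the one the paper takes (the paper cites Bloch for Prop.~\ref{prop:Bloch} itself, but its own version of the argument is the proof of the generalization Prop.~\ref{prop:Bloch2}). You work on the \emph{target} side: you identify $H^2(k,\mu_m\otimes J[m])$ with $\Jbar(\Fk)/m$ (unramifiedness of $J[m]$, Tate duality, Lang's theorem), kill the unit and unramified contributions using $\mathrm{cd}(\Fk)=1$, and then check that $\delta_{\Gm}(\pi)\cup\delta_J(P)$ computes $\bar P$, so that the symbols $\set{\pi,P}_{k/k}$ already exhaust the image; injectivity of $s_m$ is imported from the diagram \eqref{diag:sm}, i.e.\ from the Bloch--Saito theorem $\Ker(\tau)=\VXdiv$. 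The paper instead works on the \emph{source} side: it exhibits a surjection $\psi:A(k)/m\surj K(k;\Gm,A)/m$, $x\mapsto\set{\pi,x}_{k/k}$, by Mackey-symbol manipulations (units are norms from the unramified extension $K([m]^{-1}\xi)/K$, plus the projection formula), and then gets injectivity of $\psi$ by a cardinality bound coming from the surjectivity of $\plim s_m$ onto $T(A)_{G_k}$ and the Weil conjectures --- it never uses injectivity of $s_m$. The two arguments share the same germ (everything reduces to $\set{\pi,P}$, and the unit part dies because units are norms from unramified extensions, which is the symbol-side shadow of your ``unramified cup unramified vanishes''), but they buy different things: your route is shorter and makes the generator and its image in cohomology transparent, at the cost of assuming $\Ker(\tau)=\VXdiv$ and the cup-product bookkeeping you flag; the paper's route is independent of the curve and of that injectivity, which is exactly what it needs in order to state the result for an arbitrary abelian variety with good reduction in Prop.~\ref{prop:Bloch2}, where no $\VX$ and no diagram \eqref{diag:sm} are available. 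Your closing remark --- factoring through a tame symbol to $\Xs$ and invoking class field theory over finite fields --- is indeed closer to Bloch's original proof of Prop.~2.29 than either of the above.
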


Note that if $X$ has good reduction, then 
the Jacobian variety $J= \Jac(X)$ has also good reduction. 
But, the converse does not hold.

\subsection*{Prime to $p$-part}
We extend Proposition \ref{prop:Bloch} 
using the Somekawa $K$-group 
to an abelian variety following
Bloch's proof of the above proposition essentially. 

\begin{prop}
\label{prop:Bloch2}
	Let $A$ be an abelian variety over $k$ 
	which has good reduction. 
	For any $m\in \Z_{\ge 1}$ prime to $p$, 
	we have $K(k;\Gm,A)/m\simeq \Abar(\F)/m$.
	\end{prop}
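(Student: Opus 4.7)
The approach is to exploit the Galois symbol map
\[
s_m: K(k;\Gm,A)/m \longrightarrow H^2(k,\mu_m\otimes A[m]),
\]
following Bloch's proof of Proposition~\ref{prop:Bloch} but recast entirely in Somekawa $K$-group language so that no curve structure is required. The plan has two stages: first identify the target canonically with $\Abar(\F)/m$, then show $s_m$ is bijective.

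For the target identification, good reduction of $A$ combined with $m$ coprime to $p$ yields a $G_k$-equivariant isomorphism $A[m]\isomto\Abar[m]$ with unramified $G_k$-action, so the action factors through $G_\F$. Local Tate duality (the same duality used for the right vertical arrow in diagram \eqref{diag:sm}) gives
\[
H^2(k,\mu_m\otimes A[m]) \simeq A[m]_{G_k} = \Abar[m]_{G_\F},
\]
and since $G_\F\simeq\Zhat$ has cohomological dimension one, this coincides with $H^1(\F,\Abar[m])$. Lang's theorem $H^1(\F,\Abar)=0$ together with the Kummer sequence for $\Abar$ over $\F$ then yields $\Abar(\F)/m\isomto H^1(\F,\Abar[m])$, so the target is canonically identified with $\Abar(\F)/m$.

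For bijectivity, surjectivity is obtained by constructing explicit symbols: given $\bar P\in\Abar(\F)$, Hensel's lemma lifts $\bar P$ to a point $P\in A(\Ok)$, and for a unit $u\in\Ok^{\times}$ whose reduction generates $\F^\times/m$, a direct cup-product calculation of $s_m(\set{u,P}_{k/k})$ via the Kummer maps $\delta_{\Gm}$ and $\delta_A$---both unramified in this setting---identifies the image with the class of $\bar P$ modulo $m$. Injectivity is the main obstacle: following Bloch, one shows that the image of $s_m$ lies in the unramified subgroup $H^2(\kur/k,\mu_m\otimes A[m])$, where Hochschild--Serre reduces matters to the $G_\F$-cohomology of $\Abar[m]$, and one must verify that the Somekawa relations \textbf{(PF)} on the source exactly cut out the kernel of the resulting composite map to $\Abar(\F)/m$. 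Matching the $K$-theoretic relations with the Galois-cohomological ones---crucially using the unramified structure afforded by good reduction together with the fact that $m$ is prime to $p$, so that the formal group kernel $A_1$ of the reduction map contributes nothing after passing to $K(k;\Gm,A)/m$---is the technical heart of the argument.
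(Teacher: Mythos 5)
There are two genuine gaps here, one of which is fatal to the surjectivity step as you have set it up. You propose to hit the class of $\bar P$ by computing $s_m(\set{u,P}_{k/k})$ for a \emph{unit} $u\in\Ok^{\times}$. But for $m$ prime to $p$ and $A$ of good reduction, every such symbol is already zero in $K(k;\Gm,A)/m$: choosing $\xi$ with $m\xi=P$, the extension $k(\xi)/k$ is unramified, so by local class field theory $u$ is a norm from $k(\xi)$, and the projection formula gives $\set{u,P}_{k/k}=\set{\mu,\Res(P)}=\set{\mu,m\xi}=0$ modulo $m$. Cohomologically the same collapse happens: both $\delta_{\Gm}(u)$ and $\delta_A(P)$ are unramified classes, so their cup product factors through $H^2(G_{\F},\mu_m\otimes A[m])=0$ since $G_{\F}$ has cohomological dimension one. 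The generators you need are the \emph{ramified} symbols $\set{\pi,P}_{k/k}$ with $\pi$ a uniformizer; this is exactly the map $\psi(x)=\set{\pi,x}_{k/k}$ the paper uses, and the vanishing of unit symbols is what makes $\psi$ well defined rather than what produces surjectivity.

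The second gap is that your injectivity step is not an argument: you correctly identify that matching the \textbf{(PF)} relations against the Galois-cohomological kernel is ``the technical heart,'' but you do not supply it, and carrying it out directly is hard. The paper sidesteps this entirely by a counting argument: having produced a surjection $\Abar(\F)/m\simeq A(k)/m\surj K(k;\Gm,A)/m$ via $\psi$, it bounds the order of the target from below using the surjectivity of $\plim s_m:K(k;\Gm,A)\to T(A)_{G_k}$ (Gazaki) together with the identification, via good reduction and the Weil conjectures, of the prime-to-$p$ part of $T(A)_{G_k}$ with the prime-to-$p$ torsion of $\Abar(\F)$; the two bounds force bijectivity. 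Your canonical identification of $H^2(k,\mu_m\otimes A[m])$ with $\Abar(\F)/m$ via local Tate duality and Lang's theorem is fine and consistent with the paper, but as written the proposal neither produces enough elements in the image nor closes the kernel question.
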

\begin{proof}
	The kernel of the reduction $\pi:A(k)\to \Abar(\F)$ 
	is isomorphic to the formal group $\Ahat(\m_k) =:\Ahat(k)$ which has no prime to $p$ torsion (\cite{HS00}, Prop.~C.2.5, Thm.~C.2.6). 
	The reduction map induces $A(k)/m \simeq \Abar(\F)/m$ 
	for each $m$ prime to $p$.

	First, we define $\psi:A(k)/m \to K(k;\Gm,A)/m$ 
	by 
	\[
	\psi(x) = \set{\pi,x}_{k/k},
	\]
	where $\pi$ is a uniformizer of $k$. 
	The map does not depend on the choice of $\pi$. 
	In fact, for any $u\in \Ok^{\times}$, 
	by taking $\xi \in [m]^{-1}(x)$, $K = k(\xi)/k$ is unramified. 
	By local class field theory, 
	there exists $\mu \in K^{\times}$ such that $N_{K/k}\mu = u$. We have  
	$\set{u,x}_{k/k} = \set{N_{K/k}\mu, x}_{k/k} = 
	\set{\mu, \Res_{K/k}x}_{K/k} = \set{\mu, m\xi}_{k/k} = 0$ in $K(k;\Gm,A)/m$.
	
	\begin{claim}
	$\psi$ is surjective. 	
	\end{claim}
  	\begin{proof}
  		Take an element of the form $\set{\mu\varpi^n,\xi}_{K/k}$ 
  		in $K(k;\Gm,A)/m$, 
  		where $\varpi$ is the uniformizer in $K$, and 
  		$\mu\in \OKt$. 
  		As above, we have $\set{\mu,\xi}_{K/k}=0$ by considering 
  		the unramified extension $K([m]^{-1}\xi)/K$. 
  		It is enough to show that the element $\set{\varpi,\xi}_{K/k}$ is generated by the elements of the form $\set{\pi,x}_{k/k}$. 
  		
  		Let $k'$ be the maximal unramified subextension of $K/k$. 
  		$\pi' = \Res_{k'/k}\pi$ is also a uniformizer 
  		of $k'$. 
  		The extension $K/k'$ is totally ramified 
  		so that we may take $\varpi$ as  
  		$N_{K/k'}\varpi = \pi'$. 
  		Since the restriction $\Res_{K/k'}:A(k')/m \isomto A(K)/m$ is bijective, 
  		there exists $x'\in A(k')/m$ such that $\Res_{K/k'}x' = \xi$. 
  		Therefore
  		\begin{align*}
  		\set{\varpi,\xi}_{K/k} &= \set{\varpi,\Res_{K/k'}x'}_{K/k} \\
  		&=\set{N_{K/k'}\varpi, x'}_{k'/k}  \quad(\mbox{by the projection formula})\\ 
		&= \set{\Res_{k'/k}\pi, x'}_{k'/k}\\
		&= \set{\pi, N_{k'/k}x'}_{k/k}\quad(\mbox{by the projection formula}).
  		\end{align*}
  		From these equalities, the map $\psi$ is surjective.
  	\end{proof}
	Next, we show the map $\psi$ is bijective by 
	showing the following claim. 
	
	\begin{claim}
	For any $m$ prime to $p$, we have
	$\# K(k;\Gm,A)/m \ge \# \Abar(\F)/m$.
	\end{claim}
	\begin{proof}	
  	The direct limit of the Galois symbol map 
  	\[
  	\plim s_m: K(k;\Gm,A) \to \plim_{m\ge 1} H^2(k,\mu_m\otimes A[m]) \simeq T(A)_{G_k}
  	\] 
  	is known to be surjective (\cite{Gaz19}, Thm.~A,1), 
  	where $T(A)_{G_k}$ is the $G_k$-coinvariant quotient of the Tate module $T(A) = \plim_mA[m]$  of $A$ 
  	and the latter isomorphism follows from the local Tate duality theorem. 
  	Write $T(A) = T_p(A)\times T'(A)$, where 
  	$T'(A) = \plim_{(m,p) = 1}A[m]$. 
  	As $A$ has good reduction, the inertia subgroup $I \subset G_k$ acts trivially on $T'(A)$ 
  	so that $T'(A)_{G_k} \simeq T'(\Abar)_{G_{\F}}$, 
  	where $T'(\Abar) = \plim_{(m,p) = 1}\Abar[m]$. 
  	The Weil conjecture for abelian varieties implies 
  	that $T'(\Abar)_{G_{\F}}$ 
  	is isomorphic to the prime to $p$-part of the torsion subgroup of $\Abar(\F)$ (\Cf \cite{Blo81}, Prop.~2.4; \cite{KL81}, Thm.~1 (ter)). 
  	For any $m$ prime to $p$, we have 
  	\[
  	\# K(k;\Gm,A)/m \ge \# T(A)_{G_k}/m = \# T'(\Abar)_{G_{\F}}/m =\# \Abar(\F)/m.
  	\]
	\end{proof}
  	From the above claims, the surjective homomorphism 
  	\[
  	\Abar(\F)/m \simeq A(k)/m \stackrel{\psi}{\surj} 
  	K(k;\Gm,A)/m
  	\]
  	is bijective by comparing the cardinality.
\end{proof}

\section{Kummer map}
\label{sec:Kummer}
In this section, let $A$ be an abelian variety of dimension $g$ over $k$ 
	\textit{assuming} 
	\begin{itemize}
	\item[] \Ord\ $A$ has good ordinary reduction,   
	in the sense that $A$ has good reduction and its reduction  
	$\Abar$ has ordinary reduction,  
	and 
	\item[] \Rat\ $A[p]\subset A(k)$.
\end{itemize} 
Let $\kur$ be the completion of the maximal unramified 
	extension of $k$.
	The kernel of the reduction is 
	\[
	A_1(\kur) = \Ker\left(\pi:A(\kur) \to \Abar(\Fk)\right) 
	\simeq \Ahat(\m_{\kur}) =: \Ahat(\kur)
	\] 
	(\Cf \cite{HS00}, Thm.~C.2.6). 
	It is known that we have $\Ahat \times_{\Ok}\Spf(O_{\kur}) \simeq (\Gmhat)^{\oplus g}$,  
	where $\Gmhat$ is the multiplicative group 
	(\cite{Maz72}, Lem.~4.26, Lem.~4.27). 
	Since we have $A[p]\subset A(k)$, 
	$A_1[p] \subset A_1(k)$ and hence 
	we obtain isomorphisms 
	\begin{equation}
	\label{eq:A1}
			A_1[p] = A_1(\kur)[p] \simeq \Ahat(\kur)[p] \simeq \left((\Gmhat)(\kur)[p]\right)^{\oplus g} \simeq (\mu_{p})^{\oplus g}. 
	\end{equation}
%
Now we choose an isomorphism 
\begin{equation}
	\label{eq:isom}
	A[p] \isomto (\mu_{p})^{\oplus 2g}
\end{equation}
of (trivial) Galois modules 
which makes  
the following diagram commutative: 
\[
\xymatrix@C=15mm{
A_1[p] \ar@{^{(}->}[r] \ar[d]^{\simeq} & A[p]\ar[d]^{\simeq} \\
(\mu_{p})^{\oplus g}  \ar@{^{(}->}[r]^-{(\Id,1)} & (\mu_{p})^{\oplus g}\oplus (\mu_{p})^{\oplus g},
}
\]
where 
the left vertical map is given in \eqref{eq:A1}, 
and the bottom horizontal map is defined by 
\[
(\mu_{p})^{\oplus g} \to (\mu_{p})^{\oplus 2g}; (x_1,\ldots , x_g) \mapsto (x_1,\ldots , x_g, 1,\ldots ,1).
\] 

In the following, the Kummer map on $\Gm$
gives the isomorphism $\delta_{\Gm} :\Kt/p \isomto H^1(K,\mu_{p})$ 
for an extension $K/k$ and we identify these groups. 
The fixed isomorphism \eqref{eq:isom} induces an isomorphism ($\clubsuit$) below 
\[
\delta_{A}^K : A(K)/p \stackrel{\delta_A}{\inj}  H^1(K,A[p]) \stackrel{(\clubsuit)}{\simeq} H^1(K,\mu_{p})^{\oplus 2g} = (\Kt/p)^{\oplus 2g}.
\]

\begin{prop}
\label{prop:Tak}
For any finite extension $K/k$, we have the following:
\begin{enumerate}
	\item 
The image of the Kummer map $\delta_A^K$ equals to 
	\[
	(\Ubar_K)^{\oplus g} \oplus \Ker\left(\Kt/p \onto{j} (\Kur)^{\times}/p\right)^{\oplus g}, 
	\] 
	where $\Ubar_K := \Im(U_K \to \Kt/p)$, 
	$\Kur$ is the completion of the maximal unramified extension of $K$, 
	and $j$ is the map induced from the inclusion $\Kt \inj \Kurt$. 
\item
	The image of the composition $A_1(K)/p\to A(K)/p \onto{\delta_A^K} (\Kt/p)^{\oplus 2g}$ coincides with $(\Ubar_K)^{\oplus g}$. 
	In particular, $\Ahat(K)/p \simeq A_1(K)/p \simeq (\Ubar_K)^{\oplus g}$.
\end{enumerate}
\end{prop}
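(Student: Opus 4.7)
The core idea is to exploit the short exact sequence
\[
0 \to A_1(K) \to A(K) \to \Abar(\FK) \to 0,
\]
whose surjectivity comes from Hensel's lemma and \Ord\!, together with the explicit structure of the formal group $\Ahat$ in the ordinary case.

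For part (ii), I would first identify $A_1(K) \simeq \Ahat(K)$. By \Ord and the Mazur lemmas already cited, $\Ahat$ becomes isomorphic to $(\Gmhat)^{\oplus g}$ over the ring of integers of $\kur$. The hypothesis \Rat forces $A_1[p] \simeq (\mu_{p})^{\oplus g}$ to be Galois-trivial (as in \eqref{eq:A1}), and following Takahashi's observation this rigidifies the above isomorphism so that it can be chosen $G_k$-equivariantly, producing $\Ahat(K) \simeq (U_K^1)^{\oplus g}$ for every finite extension $K/k$. Reducing modulo $p$ then gives
\[
A_1(K)/p \simeq (U_K^1/p)^{\oplus g} \simeq (\Ubar_K)^{\oplus g},
\]
using that $U_K/U_K^1 \simeq \FK^{\times}$ has order prime to $p$. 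A naturality diagram confirms that this image lands in the first $g$ components of $(\Kt/p)^{\oplus 2g}$ under the fixed splitting \eqref{eq:isom}, which settles (ii).

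For (i), I would apply the snake lemma for multiplication by $p$ to the exact sequence above. Because \Rat forces the reduction $A(k)[p] \surj \Abar(\FK)[p]$, the connecting map vanishes and we obtain
\[
0 \to A_1(K)/p \to A(K)/p \to \Abar(\FK)/p \to 0.
\]
Part (ii) handles the subobject, so it remains to locate the image of $\Abar(\FK)/p$. Given a lift $\wt{x} \in A(K)$ of $\ol{x} \in \Abar(\FK)$, a $p$-th root of $\wt{x}$ can be produced in $A(\Kur)$ by Hensel (since $\Abar(\ol{\FK})$ is $p$-divisible and a mod-$p$ preimage lifts), so the Kummer class of $\wt{x}$ is unramified. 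This places the image inside $\Ker(\Kt/p \to \Kurt/p)^{\oplus g}$ on the last $g$ factors. Equality follows by counting: under \Rat one has $\mu_p \subset K$, hence $\Ker(\Kt/p \to \Kurt/p) \simeq \Hom(G_{\Kur/K},\mu_p) \simeq \Z/p$, giving the target cardinality $p^g$, which matches $|\Abar(\FK)/p| = |\Abar(\FK)[p]| = p^g$ under \Ord and \Rat\!.

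The main technical obstacle is the $G_k$-equivariant refinement of the formal-group isomorphism $\Ahat \simeq (\Gmhat)^{\oplus g}$: \textit{a priori} it is furnished only over the ring of integers of $\kur$ by Mazur, and one must check that the Serre--Tate splitting of $A[p^{\infty}]$ into multiplicative and \'etale pieces---which \Rat combined with \Ord renders explicit---descends to a Galois-equivariant trivialization. This is precisely the argument carried out in \cite{Tak11} for elliptic curves; the extension to abelian varieties of dimension $g$ should be essentially formal once one works component-wise through the $g$-fold decomposition.
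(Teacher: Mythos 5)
Your plan for part (ii) rests on a claim that does not follow from the hypotheses: that the trivialization $\Ahat\times_{\Ok}\Spf(O_{\kur})\simeq(\Gmhat)^{\oplus g}$ of Mazur can be descended to a $G_K$-equivariant isomorphism giving $\Ahat(K)\simeq (U_K^1)^{\oplus g}$ for every finite $K/k$. Under \Ord the formal group is of multiplicative type, so it is a twist of $(\Gmhat)^{\oplus g}$ by an unramified representation $G_k\to GL_g(\Zp)$ (Frobenius acting through the unit-root matrix), and \Rat only forces this twist to be trivial \emph{modulo} $p$, not integrally; in general $\Ahat$ is not split over any finite extension $K$. Neither \cite{Tak11} nor the present paper proves such a descent. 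The paper's route deliberately avoids it: Lemma \ref{lem:Tak1} works over $\Kur$, where Mazur's isomorphism is available, to show that $\delta_{A_1}^{\Kur}$ lands in $(\Ubar_{\Kur})^{\oplus g}$ (the valuation components vanish because $\Gmhat(\Kur)=U^1_{\Kur}\subset O_{\Kur}^{\times}$); Lemma \ref{lem:Tak2} then pulls this back to the containment $\Im(\delta_{A_1}^K)\subset(\Ubar_K)^{\oplus g}$ and upgrades it to equality by a cardinality count, using Mattuck's theorem $A(K)/p\simeq(\Z/p)^{\oplus g([K:\Qp]+2)}$ together with $\#\Abar(\F_K)/p=p^g$ from ordinarity. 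Your proposal never invokes Mattuck, and without either the (false) equivariant trivialization or such a count, (ii) is not established.

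There is a second, related error in your part (i): you assert that a lift $\wt{x}\in A(K)$ of $\ol{x}\in\Abar(\F_K)$ admits a $p$-th root in $A(\Kur)$, hence has unramified Kummer class. But $A(\Kur)$ is not $p$-divisible: its formal part is (essentially) $(U^1_{\Kur})^{\oplus g}$, and indeed $A_1(\Kur)/p\neq 0$ is exactly what makes Lemma \ref{lem:Tak1} nontrivial. What is true is only that $A(\Kur)=A_1(\Kur)+pA(\Kur)$, so $j\bigl(\delta_A^K(\wt{x})\bigr)$ lies in $(\Ubar_{\Kur})^{\oplus g}\oplus\{1\}^{\oplus g}$; this forces the first $g$ components of $\delta_A^K(\wt{x})$ into $\Ubar_K$ (valuation argument) and only the last $g$ into $\Ker(j)$. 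The containment $\Im(\delta_A^K)\subset(\Ubar_K)^{\oplus g}\oplus\Ker(j)^{\oplus g}$ survives, but your justification for it does not. Your final counting step for (i) is fine in principle ($\Ker(j)\simeq\Z/p$ and $\#\Abar(\F_K)/p=p^g$ are both correct), but it leans on (ii), so the gap above propagates.
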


In the following, 
we fix a finite extension $K/k$ 
and prove the above proposition. 
	First, we show the following lemma on 
	$\delta_{A_1}^{\Kur}: A_1(\Kur)/p \inj H^1(\Kur,A_1[p])\stackrel{(\diamondsuit)}{\simeq} (\Kurt/p)^{\oplus g}$, 
where the isomorphism $(\diamondsuit)$ is given by \eqref{eq:A1}.

\begin{lem}
\label{lem:Tak1}
\begin{enumerate}
	\item 
		$\Im(\delta_{A_1}^{\Kur}) \subset (\Ubar_{\Kur})^{\oplus g}$.
	\item 
	$\Im(\delta_A^{\Kur}) \subset (\Ubar_{\Kur})^{\oplus g}$.
\end{enumerate}
\end{lem}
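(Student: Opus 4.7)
The plan is to prove (i) directly using the structure of the formal group $\Ahat$ over the unramified completion, and then deduce (ii) by comparing $A_1$ and $A$ via the reduction exact sequence.

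For part (i), I would begin with the cited isomorphism $\Ahat \times_{\Ok} \Spf(O_{\Kur}) \simeq (\Gmhat)^{\oplus g}$ from \cite{Maz72}. On $\Kur$-points this identifies $A_1(\Kur) \simeq \Ahat(\Kur) \simeq (U^1_{\Kur})^{\oplus g}$, and its restriction to $p$-torsion is precisely the isomorphism $A_1[p] \simeq (\mu_p)^{\oplus g}$ fixed in \eqref{eq:A1}. By naturality of the Kummer map, $\delta_{A_1}^{\Kur}$ is identified with the $g$-fold direct sum of the Kummer map for $\Gm$ restricted to the principal units; explicitly, under the isomorphism $(\diamondsuit)$ it becomes the natural inclusion
\[
(U^1_{\Kur}/p)^{\oplus g} \inj (\Kurt/p)^{\oplus g}
\]
induced by $U^1_{\Kur} \inj \Kurt$. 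Since $U^1_{\Kur}\subset U_{\Kur}$, the image lies in $(\Ubar_{\Kur})^{\oplus g}$.

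For part (ii), I would use the short exact sequence $0 \to A_1 \to A \to \Abar \to 0$ defining $A_1$ as the kernel of reduction. Evaluating on $\Kur$-points yields
\[
0 \to A_1(\Kur) \to A(\Kur) \to \Abar(\ol{\FK}) \to 0,
\]
with surjectivity by smoothness of the model and Hensel's lemma. Since $\ol{\FK}$ is algebraically closed, $[p]$ is a surjective morphism on $\Abar(\ol{\FK})$, so $\Abar(\ol{\FK})/p = 0$ and hence $A_1(\Kur)/p \surj A(\Kur)/p$. Naturality of the Kummer map along $A_1 \inj A$ gives a commutative square relating $\delta_{A_1}^{\Kur}$ and $\delta_A^{\Kur}$, and by the compatibility built into the isomorphism \eqref{eq:isom} the induced map $H^1(\Kur,A_1[p]) \to H^1(\Kur,A[p])$ is the block inclusion into the first $g$ factors of $(\Kurt/p)^{\oplus 2g}$. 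Combined with (i), the image of $\delta_A^{\Kur}$ lies in the prescribed $(\Ubar_{\Kur})^{\oplus g}$.

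The only real obstacle is bookkeeping: one must check (a) that the Kummer map for $A_1$ over $\Kur$ corresponds under the formal-group isomorphism to the usual $\Gm$ Kummer map on each factor, and (b) that the prescribed block form of \eqref{eq:isom} makes the Kummer-map square compatible with the inclusion of the first $g$ factors. Both amount to unwinding the definitions, but the indexing of direct summands must be aligned carefully.
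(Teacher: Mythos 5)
Your proposal is correct and follows essentially the same route as the paper: part (i) via the identification $A_1(\Kur)\simeq(\Gmhat(\Kur))^{\oplus g}=(U^1_{\Kur})^{\oplus g}$ and the compatibility of the formal-group Kummer map with the $\Gm$ Kummer map (the paper phrases this as $v\circ\iota=0$ for the valuation $v$), and part (ii) via the surjection $A_1(\Kur)/p\surj A(\Kur)/p$ coming from $\Abar(\ol{\F}_K)/p=0$ together with the block inclusion $(\Id,1)$. The bookkeeping points you flag are exactly the commutative squares the paper records, so nothing is missing.
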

\begin{proof}
	(i) 
		Recall that we have $ A_1(\Kur) \simeq \Ahat(\Kur) \simeq (\Gmhat(\Kur))^{\oplus g}$ as noted above. 
	The isomorphism \eqref{eq:A1} gives 
	the following commutative diagram:
	\begin{equation}
		\label{diag:Ahat}
		\vcenter{
	\xymatrix{
		(\wh{\mathbb{G}}_m(\Kur))^{\oplus g } & \ar@{_{(}->}[l](\mu_{p})^{\oplus g}\\
	A_1(\Kur) \ar[u]^{\simeq}  &\ar@{_{(}->}[l] A_1[p]\ar[u]^{\simeq}.
	}}
	\end{equation}	
	The above diagram makes the square $(\spadesuit)$ commutative in the next diagram. 
	\begin{equation}
	\label{diag:A1Kur}
		\vcenter{
	\xymatrix@C=2mm@R=0mm{
	A_1(\Kur)/p \ar@/^5ex/[rrrr]^-{\delta_{A_1}^{\Kur}}\ar[rr]^-{\simeq} \ar@{^{(}->}[dd]_-{\delta_{A_1}}&  &\left(\Gmhat(\Kur)/p\right)^{\oplus g}\ar[rr]^{\iota}\ar@{^{(}->}[dd]_-{\delta_{\Gmhat}} & & \left(\Kurt/p\right)^{\oplus g}  \ar[rr]^-{v}\ar@{=}[dd]^{\delta_{\Gm}} & & (\Z/p)^{\oplus g}\\ 
	& (\spadesuit) & & \\
	H^1(\Kur,A_1[p]) \ar[rr]^-{\simeq} & & H^1(\Kur,\Gmhat[p])^{\oplus g}\ar@{=}[rr] & & H^1(\Kur,\mu_p)^{\oplus g}, 
	}}
	\end{equation}
	where $\iota$ is induced from $\Gmhat(\Kur) = U_{\Kur}^1 \inj \Kurt$ 
	and $v$ is the valuation map. 
	Since 
	$\Gmhat(\Kur) = U^1_{\Kur}\subset O_{\Kur}^{\times}$, 
	$v\circ \iota = 0$ in the diagrm
	\eqref{diag:A1Kur}. 
	Hence, the claim $\Im(\delta_{A_1}^{\Kur}) \subset (\Ubar_{\Kur})^{\oplus g} = \Ker(v: (\Kur/p)^{\oplus g}\to (\Z/p)^{\oplus g})$ holds.
%

\sn
(ii) 
Consider the following short exact sequence 
\[
A_1(\Kur)/p \to A(\Kur)/p \to \Abar(\F_{\Kur})/p \to 0. 
\]
Since the residue field $\F_{\Kur} = \ol{\F}_{K}$ is algebraically closed, 
$\Abar(\F_{\Kur})/p = \Abar(\ol{\F}_K)/p = 0$ 
and hence the natural map $A_1(\Kur)/p \surj A(\Kur)/p$ is surjective. 
This map gives the commutative diagram below: 
\[
\xymatrix{
A(\Kur)/p \ar[r]^-{\delta_A^{\Kur}} & (\Kurt/p)^{\oplus 2g} \\
A_1(\Kur)/p \ar@{->>}[u]\ar[r]^-{\delta_{A_1}^{\Kur}} & (\Kurt/p)^{\oplus g} \ar[u]_{(\Id,1)}.
}
\]
From this diagram, the image of $\delta_A^{\Kur}$ is 
contained in $(\Ubar_{\Kur})^{\oplus g}$ as claimed.
\end{proof}

Next, we study the image of 
\[
\delta_{A_1}^K: A_1(K)/p\inj H^1(K,A_1[p]) \simeq H^1(K,\mu_p)^{\oplus g} = (\Kt/p)^{\oplus g},
\]

\begin{lem}
\label{lem:Tak2}
	$\Im(\delta_{A_1}^{K}) = (\Ubar_K)^{\oplus g}$.
\end{lem}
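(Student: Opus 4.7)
The plan is to upgrade the containment $\Im(\delta_{A_1}^K)\subset (\Ubar_K)^{\oplus g}$ to an equality by comparing cardinalities. For the containment, I would chase the commutative square
\[
\xymatrix{
A_1(K)/p \ar[r]^-{\delta_{A_1}^K} \ar[d] & (\Kt/p)^{\oplus g} \ar[d] \\
A_1(\Kur)/p \ar[r]^-{\delta_{A_1}^{\Kur}} & (\Kurt/p)^{\oplus g}
}
\]
coming from the functoriality of the Kummer map. By Lemma \ref{lem:Tak1}(i), the lower row lands in $(\Ubar_{\Kur})^{\oplus g}$, so the image of $\delta_{A_1}^K$ maps into $(\Ubar_{\Kur})^{\oplus g}$ inside $(\Kurt/p)^{\oplus g}$. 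Since the valuation of $K$ coincides with the restriction of the valuation of $\Kur$, an element of $\Kt/p$ whose image in $\Kurt/p$ is killed by the valuation map $v:\Kurt/p\to \Z/p$ is itself killed by $v:\Kt/p\to \Z/p$; thus the preimage of $(\Ubar_{\Kur})^{\oplus g}$ in $(\Kt/p)^{\oplus g}$ is exactly $(\Ubar_K)^{\oplus g}$, yielding the containment.

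For the count, put $d=[K:\Qp]$. From \Rat\ together with the Weil pairing we have $\mu_p\subset k\subset K$, so the standard decomposition $U_K\simeq \mu(K)\oplus U_K^1$ with $U_K^1\simeq \Z_p^d\oplus \mu_{p^a}(K)$ (for some $a\ge 1$) gives $|\Ubar_K|=|U_K/U_K^p|=p^{d+1}$, hence $|(\Ubar_K)^{\oplus g}|=p^{g(d+1)}$. On the side of $A_1(K)=\Ahat(\m_K)$, the group is a compact $\Z_p$-module with no prime-to-$p$ torsion (\cite{HS00}, Prop.~C.2.5). The formal logarithm identifies an open subgroup with an $O_K$-lattice in $\mathrm{Lie}(\Ahat)\otimes K\simeq K^{\oplus g}$, so $A_1(K)\simeq \Z_p^{gd}\oplus T$ with $T$ a finite abelian $p$-group. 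Combining the hypothesis $A[p]\subset A(k)\subset A(K)$ with \eqref{eq:A1} gives $T[p]=A_1(K)[p]=A_1[p]\simeq (\mu_p)^{\oplus g}$, so $|T/p|=|T[p]|=p^g$ and therefore $|A_1(K)/p|=p^{gd}\cdot p^g=p^{g(d+1)}$.

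Since the Kummer map $\delta_{A_1}^K$ is injective, the containment $\Im(\delta_{A_1}^K)\subset (\Ubar_K)^{\oplus g}$ between finite groups of the same cardinality must be an equality. The step I expect to be the main obstacle is the computation $|A_1(K)/p|=p^{g(d+1)}$: it requires the structure theorem for $\Ahat(\m_K)$ as a $\Z_p$-module (the $\Z_p$-rank coming from the formal logarithm) and the precise identification of its $p$-torsion with $A_1[p]$, which is exactly where the hypothesis $A[p]\subset A(k)$ enters decisively; the rest of the argument is a formal consequence of the $\Kur$-case already treated in Lemma \ref{lem:Tak1}.
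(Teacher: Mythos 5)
Your proposal is correct and follows the same overall strategy as the paper: establish the containment $\Im(\delta_{A_1}^{K})\subset (\Ubar_K)^{\oplus g}$ by pushing forward to $\Kur$ and invoking Lemma \ref{lem:Tak1} (i), then conclude by comparing cardinalities of two finite groups of order $p^{g([K:\Qp]+1)}$. The one place you diverge is the count of $\#A_1(K)/p$: the paper obtains the lower bound $p^{g([K:\Qp]+1)}$ indirectly, from Mattuck's theorem applied to $A(K)$ together with the exact sequence $A_1(K)/p\to A(K)/p\to \Abar(\F_K)/p\to 0$ and the equality $\#\Abar(\F_K)[p]=p^g$ coming from \Ord\!, whereas you compute $\#A_1(K)/p$ directly from the $\Zp$-module structure $A_1(K)=\Ahat(\m_K)\simeq \Zp^{g[K:\Qp]}\oplus T$ with $T[p]=A_1[p]\simeq(\mu_p)^{\oplus g}$ by \eqref{eq:A1} and \Rat\!. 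The two computations use the same underlying inputs (ordinarity enters via the order $p^g$ of the connected part of $A[p]$, rationality of $A[p]$ via the torsion contribution), but yours stays entirely inside the formal group and gives an exact count rather than a lower bound; the paper's version has the mild advantage of quoting Mattuck as a black box instead of re-deriving the finite generation and rank of $\Ahat(\m_K)$ over $\Zp$.
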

\begin{proof}
	First, we show the following claim:	
	\begin{claim}
		$\Im(\delta_{A_1}^{K}) \subset (\Ubar_{K})^{\oplus g}$.
	\end{claim}
	\begin{proof}
	Consider the following commutative diagram: 
	\[
	\xymatrix{
	A_1(K)/p \ar[r]^-{\delta_{A_1}^K}\ar[d] & (\Kt/p)^{\oplus g} \ar[r]^-{v}\ar[d]^-{j} & (\Z/p)^{\oplus g}\ar[d]^-{\Id}   \\ 
	A_1(\Kur)/p\ar[r]^-{\delta_{A_1}^{\Kur}} & (\Kurt/p)^{\oplus g} \ar[r]^-{v}  & (\Z/p)^{\oplus g}\\
	}
	\]
	From Lemma \ref{lem:Tak1} (i), 
	the composition $v\circ \delta_{A_1}^{\Kur} = 0$ in the above diagram. 
	The composition 
	$v\circ \delta_{A_1}^K = 0$ 
	in the top sequence and hence $\Im(\delta_{A_1}) \subset 
	(\Ubar_{K})^{\oplus g}$.
\end{proof}
Next, we compare the orders of 
both of $\Im(\delta_{A_1}^{K})$ and $(\Ubar_K)^{\oplus g}$. 
As $\Ubar_K = \Ker(v:\Kt/p \to \Z/p)$, 
the last claim indicates 
\[
\#A_1(K)/p \le \# (\Ubar_K)^{\oplus g} = p^{g([K:\Qp] + 1)}.
\]
On the other hand, 
	Mattuck's theorem (\cite{Mat55}) and the assumption $A[p]\subset A(K)$ say  
	\[
	A(K)/p \simeq (\Z/p)^{\oplus g([K:\Qp]+2)}. 
	\]
	The short exact sequence 
	\[
	A_1(K)/p \to A(K)/p \to \Abar(\F_K)/p \to 0
	\]
	and 
	$\#\Abar(\F_K)/p = \#\Abar(\F_K)[p] = p^{g}$ 
	as $\Abar$ has ordinary reduction, 
	we obtain the inequality 
	\[
	\#A_1(K)/p\ge p^{g([K:\Qp]+1)}.
	\]
	Therefore, 
	the map $\delta_{A_1}^{K}:A_1(K)/p \isomto (\Ubar_K)^{\oplus g}$ is bijective.
\end{proof}

\begin{proof}[Proof of Prop.~\ref{prop:Tak}]
\begin{claim}
	$\Im(\delta_A^K) \subset (\Ubar_K)^{\oplus g}\oplus \Ker \left(j:\Kt/p\to \Kurt/p\right)^{\oplus g}$.
\end{claim}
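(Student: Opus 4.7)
The plan is to deduce the claim directly from Lemma~\ref{lem:Tak1}(ii) by exploiting functoriality of the Kummer map along the unramified base change $K\inj \Kur$. The key observation is that the chosen isomorphism $A[p]\isomto (\mu_p)^{\oplus 2g}$ is defined over $k$ (recall that $G_k$ acts trivially on $A[p]$ by \Rat\!), so the identification $H^1(-,A[p])\simeq H^1(-,\mu_p)^{\oplus 2g}$ is natural in the base field. This yields the commutative square
\[
\xymatrix@C=15mm{
A(K)/p \ar[d] \ar[r]^-{\delta_A^K} & (\Kt/p)^{\oplus 2g} \ar[d]^{j^{\oplus 2g}} \\
A(\Kur)/p \ar[r]^-{\delta_A^{\Kur}} & (\Kurt/p)^{\oplus 2g},
}
\]
where the left vertical arrow is the restriction along $K\inj \Kur$ and the right vertical arrow is the componentwise map induced by $j$.

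Next I would run a diagram chase. Given $x\in A(K)/p$, write $\delta_A^K(x)=(a_1,\ldots,a_g,b_1,\ldots,b_g)$ in $(\Kt/p)^{\oplus g}\oplus(\Kt/p)^{\oplus g}$. By Lemma~\ref{lem:Tak1}(ii), combined with the factorization through $(\Id,1)$ already recorded in diagram~\eqref{diag:A1Kur}, the image of $\delta_A^{\Kur}$ sits inside $(\Ubar_{\Kur})^{\oplus g}\oplus\{0\}^{\oplus g}\subset (\Kurt/p)^{\oplus 2g}$. Chasing $x$ around the square then forces $j(b_i)=0$ in $\Kurt/p$, i.e.\ $b_i\in\Ker(j)$, and $j(a_i)\in\Ubar_{\Kur}$, i.e.\ $v_{\Kur}(j(a_i))\equiv 0\pmod p$. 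Since $\Kur/K$ is unramified, $v_{\Kur}$ restricts to $v_K$ on $\Kt$, so $v_K(a_i)\equiv 0\pmod p$ and thus $a_i\in\Ubar_K$. Combined, $\delta_A^K(x)\in(\Ubar_K)^{\oplus g}\oplus\Ker(j)^{\oplus g}$, which is the desired containment.

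I do not anticipate a serious obstacle. The two substantive inputs---the image calculation over $\Kur$ provided by Lemma~\ref{lem:Tak1}(ii), and the preservation of valuations under an unramified extension---are already available. The only point requiring care is the compatibility between the splittings of $H^1(K,A[p])$ and $H^1(\Kur,A[p])$ with the splitting of $A[p]$ fixed over $k$, which is automatic thanks to \Rat\!.
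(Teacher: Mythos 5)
Your argument is correct and is essentially the paper's own proof: both rest on the commutative square comparing $\delta_A^K$ with $\delta_A^{\Kur}$, the containment $\Im(\delta_A^{\Kur})\subset(\Ubar_{\Kur})^{\oplus g}$ from Lemma~\ref{lem:Tak1}(ii), and the fact that $\Kur/K$ is unramified so that vanishing of the valuation can be detected after base change. Your version merely makes the coordinatewise chase (the $a_i$ and $b_i$) explicit where the paper compresses it into the vanishing of the diagonal map $\varphi$ in diagram~\eqref{diag:delta}.
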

\begin{proof}
	Consider the following commutative diagram:
		\begin{equation}
		\label{diag:delta}
		\vcenter{
		\xymatrix@R=3mm{
		 A(K)/p \ar@{-->} '[rd]_{\varphi} [rrdd]\ar[r]^-{\delta_A^K} \ar[dd] & (\Kt/p)^{\oplus 2g} \ar[dd]^(.3){j}\ar[r]^{v} & (\Z/p)^{\oplus 2g}\ar[dd]^{\Id} \\
		 & & \\
		 A(\Kur)/p \ar[r]^-{\delta_{A}^{\Kur}} & (\Kurt/p)^{\oplus 2g} \ar[r]_{v} & (\Z/p)^{\oplus 2g},
	}}
	\end{equation} 
	From Lemma \ref{lem:Tak1} (ii), 
	the image of the composition $j\circ \delta_A^K$ is contained in $(\Ubar_{\Kur})^{\oplus g}$. 
	In particular,  
	the image of $\varphi$ the dotted arrow in the above diagram is $0$ so that 
	we obtain 
	$\Im(\delta_A^K) \subset (\Ubar_K)^{\oplus g}\oplus \Ker (j)^{\oplus g}$.
\end{proof}

	\noindent
	(i) From Mattuck's theorem (\cite{Mat55}) and the assumption $A[p] \subset A(k)$,  
	\[
	A(K)/p \simeq  (\Z/p)^{\oplus g([K:\Qp] + 2)}. 
	\] 
	On the other hand, 
	\[
	\Ker\left(\Kt/p \to \Kurt/p\right) \simeq \Ker\left(H^1(K,\mu_{p}) \to H^1(\Kur,\mu_{p})\right) \simeq H^1(\Kur/K,\mu_{p}) \simeq \Z/p, 
	\]
	and hence 
	\[
	\# \left(\Ubar_K \oplus \Ker\left(\Kt/p\to \Kurt/p\right) \right)^{\oplus g} 
	= \# (\Kt/p)^{\oplus g} = p^{g([K:\Qp]+2)}.
	\]
	By counting the cardinality, 
	we obtain the equality 
	$\Im(\delta_A^K) = (\Ubar_K)^{\oplus g} \oplus \Ker(\Kt/p \to \Kurt/p)^{\oplus g}$.	
	
	\sn
	(ii) 
	From Lemma \ref{lem:Tak2} and the commutative diagram below give the assertion.
	\[
	\xymatrix{
	A_1(K)/p \ar[r]^{\delta_{A_1}^K} \ar[d] & (\Kt/p)^{\oplus g} \ar[d]^{(\Id,1)} \\
	A(K)/p\ar[r]^{\delta_A^K} & (\Kt/p)^{\oplus 2g}.
	}
	\]
\end{proof}
Now we define 
the sub Mackey functors $\U,\V \subset \Gm/p$ by 
	\begin{equation}
	\label{def:Ubar}		
	\U(K) := \Ubar_K = \Im(U_K\to \Kt/p),\quad \V(K) := \Ker\left(j:\Kt/p \to \Kurt/p\right), 
	\end{equation}
	for any finite extension $K/k$.  
	Note that, we have 
\[
\V(K) =  
 \Im\left(U_K^{pe_0(K)}\to \Kt/p\right),
\]
where $e_0(K) = e_K/(p-1)$ 
and $e_K$ is the ramification index of $K/\Qp$ (\Cf \cite{Tak11}, Rem.~3.2). 
In fact, 
both of the subgroups of $\Kt/p$ are annihilators of 
$\Ubar_K$ in the Hilbert symbol. 
By the fixed isomorphism \eqref{eq:isom}, 
the following diagram is commutative
\[
	\xymatrix{
  A(L)/p \ar@{^{(}->}[r]\ar@<0.5ex>[d]^{N_{L/K}} & H^1(L,A[p]) \ar@<0.5ex>[d]^{\Cor_{L/K}}\ar[r]^{\simeq} &  (L^{\times}/p)^{\oplus g}\ar@<0.5ex>[d]^{N_{L/K}}  & \ar@{_{(}->}[l]    \U(L)^{\oplus g} \oplus  \V(L)^{\oplus g}\,\ar@<.5ex>[d]\\ 
  A(K)/p \ar@{^{(}->}[r]\ar@<0.5ex>[u]^{\Res_{L/K}}&  H^1(K,A[p])\ar[r]^{\simeq}\ar@<0.5ex>[u]^{\Res_{L/K}} &   (\Kt/p)^{\oplus g}\ar@<0.5ex>[u] & \ar@{_{(}->}[l]    \U(K)^{\oplus g} \oplus  \V(K)^{\oplus g},\ar@<.5ex>[u]
  }
\]
for any finite extensions $L/K/k$.  
We obtain the following isomorphisms 
of Mackey functors: 

\begin{cor}
\label{cor:Tak}
	There are isomorphisms 
	\[
	A/p \simeq \U^{\oplus g} \oplus \V^{\oplus g}, 
\quad \mbox{and}\quad 
\Ahat/p \simeq \U^{\oplus g}
	\]
	of Mackey functors. 
\end{cor}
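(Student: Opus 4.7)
The plan is to promote the pointwise isomorphisms of Proposition~\ref{prop:Tak} to isomorphisms of Mackey functors. For each finite extension $K/k$, Proposition~\ref{prop:Tak}\,(i) supplies a bijection
\[
\delta_A^K: A(K)/p \isomto \U(K)^{\oplus g} \oplus \V(K)^{\oplus g},
\]
and Proposition~\ref{prop:Tak}\,(ii) supplies a bijection $\delta_{A_1}^K: \Ahat(K)/p \simeq A_1(K)/p \isomto \U(K)^{\oplus g}$. Since these bijections are defined uniformly through the Kummer map and the fixed Galois-equivariant isomorphism \eqref{eq:isom}, the only thing left is to verify naturality with respect to the restriction $\Res_{L/K}$ and the norm $N_{L/K}$ for every tower of finite extensions $L/K/k$.

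Compatibility with restriction is immediate: $\delta_A$ is the connecting homomorphism associated to $0 \to A[p] \to A \onto{p} A \to 0$ and is therefore functorial in the base field, while the identification $H^1(-,A[p]) \simeq H^1(-,\mu_p)^{\oplus 2g}$ is induced by an isomorphism of $G_k$-modules and hence commutes with $\Res_{L/K}$. Compatibility with the norm follows from the standard identity $\delta_A \circ N_{L/K} = \Cor_{L/K} \circ \delta_A$ together with the matching identity for $\Gm$, which under the Kummer isomorphism $\delta_{\Gm}:\Kt/p \isomto H^1(K,\mu_p)$ translates corestriction on cohomology into the field-theoretic norm on $\Kt/p$. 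This is precisely the commutativity already recorded in the large diagram displayed just before the corollary.

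Granting these two compatibilities, each $\delta_A^K$ intertwines the Mackey-functor structures on $A/p$ and on $\U^{\oplus g} \oplus \V^{\oplus g}$, and similarly $\delta_{A_1}^K$ gives $\Ahat/p \simeq \U^{\oplus g}$. The one step worth double-checking, and the closest thing to an obstacle, is that the norm $N_{L/K}$ on $(\Gm/p)^{\oplus 2g}$ actually preserves the sub-Mackey functor $\U^{\oplus g} \oplus \V^{\oplus g}$; but this is automatic, since the image of $\delta_A^L$ is equal to $\U(L)^{\oplus g} \oplus \V(L)^{\oplus g}$ by Proposition~\ref{prop:Tak}\,(i), so its $N_{L/K}$-image lies in the image of $\delta_A^K$, which is $\U(K)^{\oplus g} \oplus \V(K)^{\oplus g}$. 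The analogous argument with $A_1$ in place of $A$, invoking Lemma~\ref{lem:Tak2}, gives the second isomorphism, so both claims follow.
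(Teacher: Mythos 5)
Your proposal is correct and follows essentially the same route as the paper, which likewise deduces the corollary from Proposition~\ref{prop:Tak} together with the commutative diagram displayed immediately before the statement, recording the compatibility of the Kummer maps with $\Res_{L/K}$ and $N_{L/K}$ (via $\Cor_{L/K}$) under the fixed isomorphism \eqref{eq:isom}. Your explicit check that $N_{L/K}$ preserves the subfunctor $\U^{\oplus g}\oplus\V^{\oplus g}$ is a sensible point to spell out, but it does not change the argument.
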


\section{Galois symbol map}
\label{sec:symbol}

Let $A$ be an abelian variety over $k$ which has 
good reduction.  
The formal group $\Ahat$ defines a Mackey functor by 
the associated group  
$\Ahat(K) := \Ahat(\mK)$ for a finite extension $K/k$. 
Note that we have $\Ahat(K)  \simeq A_1(K)$ ([HS], Thm.~C.2.6). 
There is a short exact sequence of Mackey functors 
\begin{equation}
\label{seq:E}
0\to \wh{A} \to A \to \As \to 0,
\end{equation}
where $\As  = A/\Ahat$ is defined by the exactness 
(in the abelian category of Mackey functors). 
The Mackey functor $\As$\, has the following description 
(\Cf \cite{RS00}, Sect.~3, (3.3)): 
For a finite extension $K/k$ with residue field $\FK$, 
\begin{equation}
\label{eq:Ebar}
	\As(K) \simeq \Abar(\FK).
\end{equation}
For finite extensions $L/K/k$ with ramification index $e(L/K)$, 
the restriction $\Res_{L/K}:\As(K)\to \As(L)$ 
and the norm map $N_{L/K}:\As(L)\to \As(K)$
can be identified with the restriction 
$\Abar(\F_L) \to \Abar(\FK)$ and 
$e(L/K)N_{\F_L/\FK}:\Abar(\F_L)\to \Abar(\FK)$ respectively. 

For each $n \ge 1$, we denote by 
\[
\Mhatn := (\Gm/p^n\otimesM \wh{A}/p^n)(k),\quad \mbox{and}\quad 
\Mbarn := (\Gm/p^n\otimesM \As/p^n)(k). 
\] 
By applying $\Gm/p^n\otimesM -$ (which is right exact) 
to the sequence \eqref{seq:E}, 
we have 
\begin{equation}
	\label{diag:MK}
	\vcenter{
\xymatrix{
& \Mhatn \ar[r]\ar@{->>}[d] &  (\Gm/p^n\otimesM A/p^n)(k) \ar[r]\ar@{->>}[d] & \Mbarn \ar[r] \ar@{-->>}[d] & 0\, \\
0 \ar[r] & \Khatn \ar[r] & K(k;\Gm,A)/p^n \ar[r] & \Kbarn \ar[r] & 0, 
}}
\end{equation}
where 
$\Khatn$ is the image of the composition $\Mhatn \to (\Gm/p^n\otimesM A/p^n)(k) \to K(k;\Gm,A)/p^n$ 
and $\Kbarn$ is defined by the exactness of the lower sequence.

\subsection*{Special fiber}

\begin{lem}
	\label{lem:Mbar}
	Let $A$ be an abelian variety over $k$ with good reduction. 
	Then, we have 
	\[
	\Mbarn =  (\Gm/p^n\otimesM \As/p^n)(k) \simeq \Abar(\Fk)/p^n
	\] 
	for any $n\in \Z_{\ge 1}$.
\end{lem}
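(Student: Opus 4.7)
The plan is to construct mutually inverse homomorphisms
\[
\phi:\Abar(\Fk)/p^n \rightleftarrows \Mbarn : \psi.
\]
Fix a uniformizer $\pi$ of $k$ and, using the identification $\As(k) = \Abar(\Fk)$ from \eqref{eq:Ebar}, set $\phi(c) := \{\pi, c\}_{k/k}$. In the reverse direction, define $\psi$ on generators by
\[
\psi(\{a, b\}_{K/k}) := v_K(a) \cdot N_{\FK/\Fk}(b),
\]
where $v_K$ is the normalized valuation on $K$ and $N_{\FK/\Fk}$ is the norm map on the finite abelian group $\Abar(\FK)$. The two projection formulas for $\psi$ collapse to the identities $v_K \circ N_{L/K} = f(L/K)\, v_L$, $v_L \circ \Res_{L/K} = e(L/K) \, v_K$ combined with the description $N_{L/K} = e(L/K) N_{\F_L/\FK}$ on $\As$ recalled in \eqref{eq:Ebar}; both sides become $e(L/K) f(L/K) v_L(x) N_{\F_L/\Fk}(y)$. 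Bilinearity and compatibility with killing $p^n$ on each factor are immediate, so $\psi$ descends to $\Mbarn$, and $\psi \circ \phi = \Id$ by construction.

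The work lies in showing $\phi$ is surjective, which I split into two steps. Step A: every symbol $\{u, b\}_{K/k}$ with $u \in O_K^\times$ and $b \in \As(K)/p^n$ vanishes. Given $b \in \Abar(\FK)$, the isogeny $[p^n]$ on $\Abar$ is surjective on $\overline{\Fk}$-points, and since $\Fk$ is perfect any preimage $c$ of $b$ lies in $\Abar(\F')$ for some finite extension $\F'/\FK$. Let $K'/K$ be the unramified extension with residue field $\F'$. Then $\Res_{K'/K}(b) = p^n c$ vanishes in $\As(K')/p^n$, while by local class field theory $u = N_{K'/K}(u')$ for some $u' \in O_{K'}^\times$. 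Hence
\[
\{u, b\}_{K/k} = \{N_{K'/K}(u'), b\}_{K/k} = \{u', \Res_{K'/K}(b)\}_{K'/k} = 0.
\]

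Step B: every $\{a, b\}_{K/k}$ reduces to the image of $\phi$. Writing $a = \pi_K^r u$ with $\pi_K$ a uniformizer of $K$ and $u$ a unit, Step A discards the $u$-contribution and we are left with $\{\pi_K, b\}_{K/k}$. Let $K_0 \subset K$ be the maximal unramified subextension of $K/k$. Since $K/K_0$ is totally ramified, $\As(K) = \As(K_0) = \Abar(\F_{K_0})$, and $N_{K/K_0}(\pi_K) = u_0\, \pi$ with $u_0 \in O_{K_0}^\times$ and $\pi$ our fixed uniformizer (which is a uniformizer of $K_0$ as well). The projection formula along $K/K_0$ yields $\{\pi_K, b\}_{K/k} = \{u_0\pi, b\}_{K_0/k} = \{\pi, b\}_{K_0/k}$ after another application of Step A. Finally, the projection formula along the unramified extension $K_0/k$ and $N_{K_0/k}|_{\As} = N_{\F_{K_0}/\Fk}$ gives
\[
\{\pi, b\}_{K_0/k} = \{\pi, N_{\F_{K_0}/\Fk}(b)\}_{k/k} \in \Im(\phi).
\]

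The main obstacle is Step A, whose technical core is finding an unramified extension of $K$ over which $b$ becomes $p^n$-divisible. This is unconditional: $\Fk$ is perfect and $[p^n]$ is a (possibly inseparable but surjective) isogeny on $\Abar$, so preimages are defined over finite extensions of $\FK$. Notably, the lemma needs neither the ordinary hypothesis \Ord{} nor the rationality hypothesis \Rat{} used elsewhere in the paper.
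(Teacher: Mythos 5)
Your proof is correct and follows essentially the same route as the paper's: your Step A is exactly the paper's claim that $(\U_n \otimesM \As/p^n)(k)=0$ (same device: pass to an unramified extension over which $b$ becomes $p^n$-divisible and use surjectivity of the norm on units), while your Step B together with the map $\psi$ reproduces the paper's identification of $(\mathscr{Z}/p^n\otimesM \As/p^n)(k)$ with $\Abar(\Fk)/p^n$. The only difference is packaging: the paper organizes these two computations via the exact sequence $0\to \U_n\to \Gm/p^n\onto{v}\mathscr{Z}/p^n\to 0$ of Mackey functors and right-exactness of the Mackey product, whereas you work directly with generators and an explicit inverse.
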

\begin{proof}
	First, we define a Mackey functor $\mathscr{Z}$ by 
	$\mathscr{Z}(K) = \Z$ 
	for each finite extension $K/k$ and, 
	for a finite extensions $L/K/k$, 
	the norm $N_{L/K}:\mathscr{Z}(L) \to \mathscr{Z}(K)$ is multiplication by the residue degree $f({L/K})$ of $L/K$ 
	and the restriction $\Res_{L/K}:\mathscr{Z}(K) \to \mathscr{Z}(L)$  
	is multiplication by the ramification index $e({L/K})$ of $L/K$. 
	The valuations give a morphism 
	 $v:\Gm \to \mathscr{Z}$ of Mackey functors 
	 defined by the valuation map  
	 \[
	 v: \Gm(K) = K^{\times } \to \mathscr{Z}(K) = \Z,
	 \]
	 of $K$, for any finite extension $K/k$.  
	As in \eqref{def:Ubar}, we define a Mackey functor $\U_n$ by 
	\[
	\U_n(K) := \Im\left(U_K \to \Kt/p^n\right)
	\]
	which fits into 
	the short exact sequence 
	\[
	0\to \U_n \to \Gm/p^n \onto{v} \mathscr{Z}/p^n \to 0.
	\]
	From the right exactness of $- \otimesM \As/p^n$ for $\As = A/\wh{A}$ (\Cf \eqref{eq:Ebar}), we have 
	\begin{equation}
	\label{seq:Ubar}
		\U_n \otimesM \As/p^n \to \Gm/p^n \otimesM \As/p^n \to \mathscr{Z}/p^n \otimesM \As/p^n\to 0.
	\end{equation}
	
	\begin{claim}
	$(\mathscr{Z}/p^n \otimesM \As/p^n)(k) \simeq \Abar(\Fk)/p^n$
	\end{claim}
	\begin{proof}
		By identifying $\As(K) \simeq \Abar(\FK)$ (\Cf \eqref{eq:Ebar}), define 
		\[
			\varphi: (\mathscr{Z}/p^n \otimesM \As/p^n)(k) \to 
				\Abar(\Fk)/p^n; 
			\set{a,x}_{K/k} \mapsto N_{\FK/\Fk}(ax)
		\] 
		and 
		\[
			\psi:\Abar(\Fk)/p^n \to 
				(\mathscr{Z}/p^n \otimesM \As/p^n)(k); 
				x \mapsto \set{1,x}_{k/k}.
		\] 
		It is easy to see that these maps are well-defined and $\varphi\circ \psi = \Id$, where $\Id$ is the identity map. 
		To show  $\psi \circ \varphi= \Id$, take an element of the form 
		$\set{a,x}_{K/k} \in (\mathscr{Z}/p^n \otimesM \As/p^n)(k)$. 
		Let $k'\subset K$ be the maximal unramified subextension of $K/k$. 
		We have 
		\begin{align*}
			\psi\circ \varphi(\set{a,x}_{K/k}) 
		  	&= \set{1,N_{\FK/\Fk}(ax)}_{k/k}\\
		  	&= \set{a,N_{k'/k}x}_{k/k} 
		  		\quad (\mbox{since $N_{k'/k} = N_{\FK/\Fk}:\As(K) = \Abar(\FK)\to \As(k) = \Abar(\Fk)$})\\
		  	&= \set{\Res_{k'/k}a,x}_{k'/k} \quad (\mbox{by the projection formula})\\
		  	&= \set{a,x}_{k'/k}\quad (\mbox{by $\Res_{k'/k}a = e(k'/k)a = a$})\\
		  	&= \set{N_{K/k'}a, x}\quad (\mbox{by $N_{K/k'}(a) = f(K/k')a = a$})\\
		  	&= \set{a,\Res_{K/k'}x}_{k'/k}\quad (\mbox{by the projection formula})\\
		   	&= \set{a,x}_{K/k} \quad (\mbox{since $\Res_{K/k'}= \Id:\As(k') = \Abar(\FK)\to \As(K) =\Abar(\FK) $}).
  		\end{align*}
	\end{proof}
	
	\begin{claim}
		$(\U_n \otimesM \As/p^n)(k) = 0$.
	\end{claim}
	\begin{proof}
		Recall that $U^1_k = 1+\mathfrak{m}_k \subset U_k = O_k^{\times}$ induces 
		$U_k/U^1_k \simeq \Fk^{\times}$. 
		The residue field $\Fk$ is finite, in particular, perfect, 
		so that we have  $\U_n(k) = \Im(U^1_k \to k^{\times}/p^n)$.  
		From the norm arguments, it is enough to show that 
		$\set{a,x}_{k/k} = 0$ in $(\U_n \otimesM \As/p^n) (k)$.
		For such an element $\set{a,x}_{k/k}$, 
		there exists a finite unramified extension $K/k$ such that 
		$\Res_{K/k}(x) = p^n\xi$ for some $\xi \in \As(K) \simeq \Abar(\FK)$. 
		Since the norm map $N_{K/k}:U_K^1\to U_k^1$ is surjective 
		(\cite{Ser68}, Chap.~V, Prop.~3), 
		one can find $\alpha \in \U_n(K)$ such that $N_{K/k}(\alpha) =a$. 
		From this, we obtain 
		\begin{align*}
			\set{a,x}_{k/k} &= \set{N_{K/k}(\alpha),x}_{k/k} \\
			&= \set{\alpha, \Res_{K/k}(x)}_{K/k} \quad (\mbox{by the projection formula})\\
			&= \set{\alpha, p^n\xi}_{K/k} = 0.
		\end{align*}
	\end{proof}
	The short exact sequence \eqref{seq:Ubar} and 
	the above claims yield the assertion. 
\end{proof}

\subsection*{Mackey product and the Somekawa $K$-group}
We define 
\begin{equation}
\label{def:N}
		N :=\max \set{n\in \Z_{\ge 0} | A[p^n] \subset A(k)}. 
\end{equation}
In the following, we assume the abelian variety $A$ satisfies 
the condition \Rat in the last section.
From this assumption \Rat, $N\ge 1$. 
We fix isomorphisms 
$A[p^n] \simeq (\mu_{p^n})^{\oplus 2g}$ for all $n\le N$ as follows: 
First, take 
an isomorphism $A[p^N] \simeq (\mu_{p^N})^{\oplus 2g}$ which makes the 
diagram 
\[
\xymatrix@C=15mm{
A_1[p^N] \ar@{^{(}->}[r] \ar[d]^{\simeq} & A[p^N]\ar[d]^{\simeq} \\
(\mu_{p^N})^{\oplus g}  \ar@{^{(}->}[r]^-{(\Id,1)} & (\mu_{p^N})^{\oplus g}\oplus (\mu_{p^N})^{\oplus g}
}
\]
commutes as in \eqref{eq:isom}. 
For each $1\le n< N$, we choose $A[p^n]\simeq (\mu_{p^n})^{\oplus 2g}$ as the following diagram is commutative:
\begin{equation}
\label{diag:isoms}
	\vcenter{
\xymatrix{
A[p^{n}] \ar[d]_{\simeq}\ar@{^{(}->}[r]& A[p^{n+1}]\ar[d]^{\simeq}\\ 
(\mu_{p^{n}})^{\oplus 2g} \ar@{^{(}->}[r]& (\mu_{p^{n+1}})^{\oplus 2g}.
}}
\end{equation}

To simplify the notation, put 
\[
\M_n := (\Gm \otimesM A)(k)/p^n \simeq (\Gm/p^n \otimesM A/p^n)(k),\quad  \mbox{and}\quad \H_n :=H^2(k,\mu_{p^n}\otimes A[p^n]).
\]

\begin{thm}
	\label{thm:GmAV}
	For any $n \ge 1$, 
	the Galois symbol map 
	$s_{p^n}^M:\Mn \to \Hn$  (Def.~\ref{def:symbol})
	 is bijective.
\end{thm}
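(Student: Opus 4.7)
The plan is to reduce the bijectivity of $s_{p^n}^M$ to a component-wise Hilbert-symbol computation, leveraging the Mackey decomposition of $A/p^n$ analogous to Corollary~\ref{cor:Tak}.

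First I would extend Corollary~\ref{cor:Tak} from $p$ to $p^n$: writing $\U_n(K) := \Im(U_K \to K^{\times}/p^n)$ and $\V_n(K) := \Ker(K^{\times}/p^n \to (K^{\mathrm{ur}})^{\times}/p^n)$, one expects $A/p^n \simeq \U_n^{\oplus g} \oplus \V_n^{\oplus g}$ as Mackey functors. This should follow by iterating the argument of Proposition~\ref{prop:Tak}: the compatibility in diagram \eqref{diag:isoms} of the isomorphisms $A[p^n] \simeq (\mu_{p^n})^{\oplus 2g}$ for $n \le N$, combined with Mattuck's theorem modulo $p^n$ and the vanishing of $\Abar(\F_{K^{\mathrm{ur}}})/p^n$, controls the image of the Kummer map $\delta_A^K$ inside $(K^{\times}/p^n)^{\oplus 2g}$ in exactly the same way as in the $p$-torsion case.

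Granting this decomposition, the right-exactness of $\Gm/p^n \otimes^M -$ yields
\[
\Mn \simeq \bigl(\Gm/p^n \otimes^M \U_n\bigr)(k)^{\oplus g} \oplus \bigl(\Gm/p^n \otimes^M \V_n\bigr)(k)^{\oplus g},
\]
while the fixed trivialization $A[p^n] \simeq (\mu_{p^n})^{\oplus 2g}$ gives
\[
\Hn \simeq H^2(k, \mu_{p^n}^{\otimes 2})^{\oplus 2g} \simeq (\Z/p^n)^{\oplus 2g}
\]
via the local invariant map. Naturality of $\delta_A$, $\delta_{\Gm}$, the cup product and corestriction makes the Galois symbol diagonal with respect to these decompositions, so $s_{p^n}^M$ splits as $2g$ component maps of the form $(\Gm/p^n \otimes^M \mathscr{F})(k) \to \Z/p^n$ with $\mathscr{F} \in \{\U_n, \V_n\}$.

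The heart of the proof is then verifying that each of these $2g$ component maps is an isomorphism. Because $\U_n$ and $\V_n$ realize the two mutually annihilating subspaces under the Hilbert symbol (as noted after Corollary~\ref{cor:Tak}), each component pairing amounts to a piece of the Moore/Tate computation of $K_2$ modulo $p^n$ for the local field: one expects each to be cyclic of order exactly $p^n$ and to surject onto $\Z/p^n$. Surjectivity is produced by an explicit symbol $\{u, \pi\}_{K/k}$ with $u$ projecting to a generator of the relevant $\U_n(K)$ or $\V_n(K)$; the matching upper bound on cardinality uses the projection formula together with the Mackey relations (PF).

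The main obstacle is twofold. First, for $n > N$ the global trivialization $A[p^n] \simeq (\mu_{p^n})^{\oplus 2g}$ fails, so the clean decomposition of both $\Mn$ and $\Hn$ above is not immediately available; this case should be reached by induction on $n$ from the base $n = N$, using the short exact sequence $0 \to A[p^{n-N}] \to A[p^n] \to A[p^{N}] \to 0$ and a snake lemma on the corresponding Mackey/cohomology diagrams. Second, the explicit cardinality count in each $(\Gm/p^n \otimes^M \U_n)(k)$ and $(\Gm/p^n \otimes^M \V_n)(k)$ summand is the technically delicate step: bookkeeping the (PF) relations here requires Bloch--Kato-style local $K_2$-computations rather than purely formal arguments, and it is where the condition \Ord (inherited from Corollary~\ref{cor:Tak}) really pulls its weight.
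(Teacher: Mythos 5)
Your base case is sound and matches the paper's: for $n=1$ the decomposition $A/p\simeq \U^{\oplus g}\oplus\V^{\oplus g}$ of Corollary~\ref{cor:Tak} reduces $s_p^M$ to the component maps $(\Gm/p\otimesM\U)(k)\to H^2(k,\mu_p^{\otimes 2})$ and $(\Gm/p\otimesM\V)(k)\to H^2(k,\mu_p^{\otimes 2})$, whose bijectivity is \cite{Hir16}, Thm.~3.6. The problem is that you then try to run the same component-wise argument modulo $p^n$. The cited input is only available mod $p$; mod $p^n$ the injectivity of each component map $(\Gm/p^n\otimesM\U_n)(k)\to\Z/p^n$ amounts to an upper bound $\#(\Gm/p^n\otimesM\U_n)(k)\le p^n$, and ``bookkeeping the (PF) relations'' is not a proof of such a bound --- controlling the size of a Mackey product from above is exactly the hard injectivity statement being established. (Takemoto's computations bound the \emph{image} under the Hilbert symbol, not the source.) The paper deliberately avoids this by using the mod-$p$ decomposition only as the base of an induction on $n$, with the inductive step being a four-lemma chase on the diagram
\[
\xymatrix{
k^{\times}\otimesZ A[p] \ar[r]^-{\psi}\ar[d]^{\phi} & \Mn \ar[r]^p\ar[d] & \M_{n+1} \ar[r]\ar[d] & \M_1\ar[d] \\
H^1(k,\mu_p\otimes A[p]) \ar[r] & \Hn \ar[r] & \H_{n+1} \ar[r] & \H_1,
}
\]
where the decisive point is that $\phi$ is \emph{surjective} because $A[p]\simeq(\mu_p)^{\oplus 2g}$ is a trivial Galois module by \Rat. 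Your proposal never identifies this surjectivity, which is the engine of the whole argument.

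The second, and more serious, gap is the case $n>N$. You propose to jump from $n=N$ to $n$ using $0\to A[p^{n-N}]\to A[p^n]\to A[p^N]\to 0$ and ``a snake lemma.'' For $n-N>N$ the module $A[p^{n-N}]$ is no longer rational, so neither the map $\psi$ (which needs $A[p^{n-N}]\subset A(k)$ to land in $A(k)/p^{\bullet}$) nor the surjectivity of the corresponding $H^1$-term is available, and the chase breaks down; iterating jumps of size $\le N$ does not repair this without reducing to steps of size one, i.e.\ to the paper's induction. Note also that for $n>N$ one does not have $\Hn\simeq(\Z/p^n)^{\oplus 2g}$, so there is no component-wise target to compare against. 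The correct route is: surjectivity of $s_{p^n}^M$ for all $n$ from \cite{Gaz19}, Thm.~A.1, and injectivity by the one-step induction above, which only ever invokes the rationality of $A[p]$ and hence works uniformly for all $n\ge 1$.
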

\begin{proof}
The map $s_{p^n}^M$ 
is surjective (\cite{Gaz19}, Thm.~A.1). 
We show that $s_{p^n}^M$ is injective by induction on $n$. 

\begin{claim}
$s_p^M$ is bijective. 
\end{claim}
\begin{proof}
By the fixed isomorphism $A[p] \simeq (\mu_p)^{\oplus 2 g}$ 
as in \eqref{diag:isoms}, 
we have the isomorphism  
\[
A/p \simeq \U^{\oplus g} \oplus \V^{\oplus g} 
\]
of Mackey functors (Cor.~\ref{cor:Tak}). 
The Mackey product $\M_1 = \Gm/p\otimesM A/p$ 
is decomposed as 
\[
\M_1 = \Gm/p\otimesM \left(\U^{\oplus g} \oplus \V^{\oplus g} \right) \simeq 
(\Gm/p \otimesM \U)^{\oplus g} \oplus 
(\Gm/p \otimesM \V)^{\oplus g}.
\]
Since the composition 
\[
(\Gm/p\otimesM\V)(k) \to (\Gm/p\otimesM \U)(k) \to (\Gm/p\otimesM \Gm/p)(k) \simeq K_2^M(k)/p \simeq H^2(k,\mu_{p}^{\otimes 2})
\] 
is bijective (\cite{Hir16}, Thm.~3.6),
the Galois symbol map $s_p^M$ is also bijective 
from the commutative diagram below: 
\[
\xymatrix{
\M_1 \ar[d]^{\simeq} \ar[r]^{s_p^M} & \H_1 \ar[d]^{\simeq}  \\
(\Gm/p \otimesM \U)(k)^{\oplus g} \oplus 
(\Gm/p \otimesM \V)(k)^{\oplus g} \ar[r]^-{\simeq} & H^2(k,\mu_p^{\otimes 2})^{\oplus 2 g}.
}
\]
\end{proof}

We consider the following commutative diagram with exact rows 
	except possibly at $\M_n$ (\Cf \cite{RS00}, Proof of Lem.~4.2.2): 
	\[
		\xymatrix{
		k^{\times}\otimesZ A[p] \ar[r]^-{\psi}\ar[d]^{\phi} 
			& \M_n \ar[r]^p\ar@{^{(}->}[d]^{s_{p^n}^M}  
			& \M_{n+1} \ar[r]\ar[d]^{s_{p^{n+1}}^M} & \M_1\ar@{^{(}->}[d]^{s_{p}^M} \\
		H^1(k,\mu_{p}\otimes A[p]) \ar[r] & \H_n \ar[r]
			& \H_{n+1} \ar[r] & \H_1. 
	}
	\]
	Here, 
	the far left vertical map $\phi$ is given by 
	\[
		\kt\otimesZ A[p] \xrightarrow{\delta_{\Gm}\otimes \Id} 
			H^1(k,\mu_p)\otimesZ H^0(k,A[p]) \onto{\cup} H^1(k,\mu_p\otimes A[p])
	\]
	and $\psi$ is induced from $A[p]\inj A(k) \surj A(k)/p^n$. 
	By the fixed isomorphism $A[p]\simeq (\mu_{p})^{\oplus 2g}$ 
	of trivial Galois modules, 
	the map $\phi$ becomes 
	\[
		\kt\otimesZ A[p] \surj (\kt/p\otimesZ \mu_p)^{\oplus 2g} \simeq 
			H^1(k,\mu_p^{\otimes 2})^{\oplus 2g} \simeq H^1(k,\mu_p\otimes A[p]).
	\]
	In particular, $\phi$ is surjective. 
	From the diagram chase and the induction hypothesis, 
	$s_{p^{n+1}}^M$ is injective. 	
\end{proof}

%

\begin{cor}
\label{cor:Sconj}
 \begin{enumerate}
 	\item 
 	For any $n \ge 1$, 
 	we have $\Mn = (\Gm/p^n \otimesM A/p^n)(k) \simeq K(k;\Gm,A)/p^n$.
 	
 	\item 
 	For any $n \ge 1$, the Galois symbol map 
 	$s_{p^n}$ is bijective. 
 	
 	\item 
	For any $n\le N$, we have  
	$\M_n \simeq (\Z/p^n)^{\oplus 2g}$.
 \end{enumerate}
\end{cor}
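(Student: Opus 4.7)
The plan is to deduce all three parts from Theorem \ref{thm:GmAV}, which already gives that the Galois symbol map $s_{p^n}^M:\M_n\to\Hn$ on the full Mackey product is bijective.

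For \textbf{(i)}, I would observe that by construction the Galois symbol map $s_{p^n}^M$ on the Mackey product factors through the canonical surjection $\pi:\M_n\twoheadrightarrow K(k;\Gm,A)/p^n$ followed by the Galois symbol map $s_{p^n}$ on the Somekawa $K$-group (this is the very way $s_{p^n}$ was defined in \eqref{def:s}). Since $s_{p^n}^M = s_{p^n}\circ \pi$ is injective by Theorem \ref{thm:GmAV}, the surjection $\pi$ must also be injective, hence an isomorphism. This shows $\M_n\simeq K(k;\Gm,A)/p^n$.

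For \textbf{(ii)}, the bijectivity of $s_{p^n}$ is then immediate: $s_{p^n}^M$ is bijective by Theorem \ref{thm:GmAV}, and it equals $s_{p^n}\circ\pi$ where $\pi$ is an isomorphism by (i); so $s_{p^n}$ is bijective as well.

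For \textbf{(iii)}, when $n\le N$ the rationality condition gives $A[p^n]\subset A(k)$, and via the fixed isomorphism $A[p^n]\simeq(\mu_{p^n})^{\oplus 2g}$ of \eqref{diag:isoms} this forces $\mu_{p^n}\subset k$, so that $\mu_{p^n}\otimes A[p^n]$ is a trivial $G_k$-module isomorphic to $(\mu_{p^n}^{\otimes 2})^{\oplus 2g}\simeq(\Z/p^n)^{\oplus 2g}$. The group $\Hn=H^2(k,\mu_{p^n}\otimes A[p^n])$ then splits as a direct sum of $2g$ copies of $H^2(k,\mu_{p^n})\simeq\Z/p^n$, the latter being the $p^n$-torsion of the Brauer group of the $p$-adic field $k$. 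Combining with (ii), we conclude
\[
\M_n \simeq \Hn \simeq (\Z/p^n)^{\oplus 2g}.
\]
The work is essentially done by Theorem \ref{thm:GmAV}; the only substantive observation beyond diagram-chasing is the identification of $\Hn$ in the range $n\le N$, which is routine Tate-duality bookkeeping once the triviality of the Galois action is in hand. I do not anticipate a serious obstacle; the main point is simply to record that the factorization of $s_{p^n}^M$ through the Somekawa quotient forces that quotient to coincide with $\M_n$.
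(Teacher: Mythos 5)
Your proposal is correct and follows essentially the same route as the paper: parts (i) and (ii) come from the factorization $s_{p^n}^M = s_{p^n}\circ\pi$ together with the bijectivity of $s_{p^n}^M$ from Theorem \ref{thm:GmAV}, and part (iii) from identifying $\Hn\simeq H^2(k,\mu_{p^n}^{\otimes 2})^{\oplus 2g}\simeq(\Z/p^n)^{\oplus 2g}$ when $n\le N$. The paper states this more tersely, but the content is identical.
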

\begin{proof}
	As $s_{p^n}^M$ factors through $s_{p^n}$, 
	the assertions (i) and (ii) follow from Theorem \ref{thm:GmAV}. 
	If $n\le N$, 
	we have \[
	K(k;\Gm,A)/p^n \isomto  \Hn  \simeq H^2(k,\mu_{p^n}^{\otimes 2})^{\oplus 2g} \simeq (\Z/p^n)^{\oplus 2g}.
	\]
	The assertion (iii) follows from (i).	
\end{proof}

Note that we have $\M_n\simeq K(k;\Gm,A)/p^n$ the middle vertical map in the diagram \eqref{diag:MK} is bijective (Cor.~\ref{cor:Sconj}) and hence we have $\Mbarn \simeq \Kbar_n$.
 
\begin{cor}
\label{cor:Kbar}
	$\Kbarn \simeq \Mbarn \simeq \Abar(\F)/p^n$.
\end{cor}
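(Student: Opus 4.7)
The plan is essentially a formal consequence of what has already been established, and the proof should be short. The second isomorphism $\Mbarn \simeq \Abar(\F)/p^n$ is literally Lemma \ref{lem:Mbar}, so the only real content is the first isomorphism $\Kbarn \simeq \Mbarn$.

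To get that, I would look at the commutative diagram \eqref{diag:MK}. The top row is exact on the right (by right exactness of the Mackey tensor product applied to \eqref{seq:E} tensored with $\Gm/p^n$), and the bottom row is exact by the definition of $\Kbarn$. By Corollary \ref{cor:Sconj}(i), the middle vertical map $\M_n \to K(k;\Gm,A)/p^n$ is an isomorphism. By construction, $\Khatn$ is the image of $\Mhatn$ under the composition $\Mhatn \to \M_n \to K(k;\Gm,A)/p^n$, so under the middle isomorphism $\Khatn$ corresponds exactly to the image of $\Mhatn$ in $\M_n$. Taking cokernels gives the induced map $\Mbarn \to \Kbarn$ on the right, and this map is an isomorphism because it is the isomorphism of quotients $\M_n / \Im(\Mhatn) \isomto K(k;\Gm,A)/p^n / \Khatn$.

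Combining with Lemma \ref{lem:Mbar} then yields
\[
\Kbarn \simeq \Mbarn \simeq \Abar(\F)/p^n,
\]
which completes the argument.

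There is no real obstacle here — everything is diagram chasing once Corollary \ref{cor:Sconj}(i) is in hand. The only point to be slightly careful about is that the right exactness of the Mackey tensor product (used to get surjectivity of $\M_n \to \Mbarn$) is what ensures the top row is exact on the right, so that the induced map on cokernels exists and the quotient description of $\Mbarn$ is valid.
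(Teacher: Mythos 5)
Your proof is correct and follows the same route as the paper: the paper also deduces $\Mbarn \simeq \Kbarn$ from the bijectivity of the middle vertical map in \eqref{diag:MK} (Cor.~\ref{cor:Sconj}(i)) together with the fact that $\Khatn$ is by definition the image of $\Mhatn$, and then quotes Lemma \ref{lem:Mbar} for $\Mbarn \simeq \Abar(\F)/p^n$. Your version just spells out the cokernel comparison more explicitly than the paper does.
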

\begin{proof}
The latter isomorphism 
$\Mbarn \simeq \Abar(\F)/p^n$ follows from Lemma \ref{lem:Mbar}. 
\end{proof}

\subsection*{Formal groups}
Since the Mackey functor $\Ahat$ defined by the formal group of $A$ satisfies the Galois descent, 
we have the Galois symbol map (Def.~\ref{def:symbol}) of the form
\[
\shat_{p^n}^M: \Mhatn \simeq  (\Gm\otimesM \Ahat)(k)/p^n 
\to H^2(k,\mu_{p^n}\otimes \Ahat[p^n]) =: \Hhatn.
\]

\begin{lem}
	\label{lem:Khat}
	For $n\le N$, we have the following: 
	\begin{enumerate}
	\item The Galois symbol map 
	$\shat_{p^n}^M: \Mhatn \to \Hhatn$ 
	is bijective. 
	\item $\Mhatn \simeq \Khatn \simeq (\Z/p^n)^{\oplus g}$.
	\end{enumerate}
\end{lem}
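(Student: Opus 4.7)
The plan is to imitate the inductive proof of Theorem~\ref{thm:GmAV}. First, I would record the following key preliminary fact: for $n \le N$, the formal-group torsion $\Ahat[p^n]$ is a trivial $G_k$-module isomorphic to $(\mu_{p^n})^{\oplus g}$. Indeed, by definition of $N$ we have $A[p^n] \subset A(k)$, so $\Ahat[p^n] \simeq A_1[p^n] \subset A_1(k)$; combined with the isomorphism $\Ahat \times_{\Ok}\Spf(O_{\kur}) \simeq (\Gmhat)^{\oplus g}$ guaranteed by the ordinary-reduction hypothesis, this yields the claim, compatibly with the fixed trivializations of \eqref{diag:isoms}. In particular $\mu_{p^n} \subset k$.

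For part~(i) I would induct on $n$, in complete parallel to Theorem~\ref{thm:GmAV}. The base case $n=1$ follows directly: by Corollary~\ref{cor:Tak}, $\Ahat/p \simeq \U^{\oplus g}$ as Mackey functors, so $\Mhat_1 \simeq (\Gm/p \otimesM \U)(k)^{\oplus g}$; by \cite{Hir16}, Thm.~3.6, the Galois symbol map identifies $(\Gm/p \otimesM \U)(k)$ with $H^2(k,\mu_p^{\otimes 2})$, which under the preliminary identification matches $\Hhat_1$. For the inductive step, I would form the commutative ladder whose top row is
\[
\kt \otimesZ \Ahat[p] \longrightarrow \Mhat_n \stackrel{p}{\longrightarrow} \Mhat_{n+1} \longrightarrow \Mhat_1 \longrightarrow 0
\]
and whose bottom row is $H^1(k,\mu_p\otimes\Ahat[p]) \to \Hhatn \to \Hhat_{n+1} \to \Hhat_1 \to 0$, produced respectively from right exactness of $\Gm/p^{n+1}\otimesM -$ applied to the interleaving of snake-lemma sequences for $\Ahat$ and from Galois cohomology of $0 \to \Ahat[p] \to \Ahat[p^{n+1}] \to \Ahat[p^n] \to 0$. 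The leftmost vertical map is surjective (as in Theorem~\ref{thm:GmAV}, the fixed trivialization reduces it to a cup product landing in $H^1(k,\mu_p^{\otimes 2})^{\oplus g}$), $\shat_p^M$ is bijective by the base case, and the induction hypothesis gives bijectivity of $\shat_{p^n}^M$. A diagram chase—coupled with surjectivity of $\shat_{p^{n+1}}^M$ coming from \cite{Gaz19}, Thm.~A.1—then forces bijectivity of $\shat_{p^{n+1}}^M$.

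For part~(ii), the trivialization $\Ahat[p^n] \simeq (\mu_{p^n})^{\oplus g}$ together with $\mu_{p^n}\subset k$ gives $\Hhatn \simeq H^2(k,\mu_{p^n}^{\otimes 2})^{\oplus g} \simeq (\Z/p^n)^{\oplus g}$ by local Tate duality (equivalently, by Merkurjev-Suslin applied to $K_2(k)/p^n$), so by~(i) we obtain $\Mhatn \simeq (\Z/p^n)^{\oplus g}$. To identify $\Mhatn$ with $\Khatn$ it suffices to show that $\Mhatn \to \Mn$ is injective; I would compare Galois symbol maps, using that both $\shat_{p^n}^M$ and $s_{p^n}^M$ are bijective (by~(i) and Theorem~\ref{thm:GmAV}) while the map $\Hhatn \to \Hn$ induced by $\Ahat[p^n] \inj A[p^n]$ becomes, under the chosen trivializations, the split inclusion $(\Id,1):(\mu_{p^n})^{\oplus g} \inj (\mu_{p^n})^{\oplus 2g}$, hence induces an injection on $H^2$. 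Therefore $\Mhatn \inj \Mn$, and $\Khatn = \Im(\Mhatn \to \Mn) \simeq \Mhatn$.

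The main obstacle I anticipate is rigorously constructing the top row of the inductive-step ladder: since $\otimesM$ is only right exact, producing the exact sequence $\kt \otimesZ \Ahat[p] \to \Mhatn \stackrel{p}{\to} \Mhat_{n+1} \to \Mhat_1 \to 0$ requires the same snake-lemma bookkeeping as in \cite{RS00}, Lem.~4.2.2, now applied to the formal-group quotients rather than to $A$ itself, and one must further check that the multiplication-by-$p$ connecting maps in the $\Mhat_{\bullet}$ tower are compatible with multiplication by $p$ on the second factor $\Ahat$.
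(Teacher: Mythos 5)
Your argument is correct, and for the base case $n=1$ and for part (ii) it coincides with the paper's own proof: both rest on the isomorphism $\Ahat/p\simeq\U^{\oplus g}$ of Corollary \ref{cor:Tak} together with \cite{Hir16}, Thm.~3.6, and on the observation that under the fixed trivializations $\Hhatn\to\Hn$ becomes the split inclusion $H^2(k,\mu_{p^n}^{\otimes 2})^{\oplus g}\inj H^2(k,\mu_{p^n}^{\otimes 2})^{\oplus 2g}$, so that bijectivity of $\shat_{p^n}^M$ and of $s_{p^n}^M$ forces $\Mhatn\inj\Mn$ and hence $\Khatn\simeq\Mhatn$. The genuine divergence is the inductive step of (i). You transplant the full four-term ladder of Theorem \ref{thm:GmAV}, with the extra term $\kt\otimesZ\Ahat[p]$ on the left, and you correctly identify the price: one must re-derive the exact sequence of \cite{RS00}, Lem.~4.2.2 for the Mackey functor $\Ahat$ rather than for a semi-abelian variety. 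The paper avoids this entirely by exploiting the hypothesis $n\le N$: since $\Hhat_{n-1}$, $\Hhatn$ and $\Hhat_1$ are then explicitly $(\Z/p^{n-1})^{\oplus g}$, $(\Z/p^{n})^{\oplus g}$ and $(\Z/p)^{\oplus g}$, counting orders in the exact bottom row already shows that $\Hhat_{n-1}\to\Hhatn$ is injective, and the plain four lemma applied to the short ladder $\Mhat_{n-1}\onto{p}\Mhatn\to\Mhat_1\to 0$ over $\Hhat_{n-1}\to\Hhatn\to\Hhat_1\to 0$ gives bijectivity with no auxiliary term. This also removes the need for your appeal to \cite{Gaz19}, Thm.~A.1, which is stated for semi-abelian varieties and does not directly cover $\Gm\otimesM\Ahat$; fortunately surjectivity of $\shat_{p^{n+1}}^M$ falls out of the diagram chase in either version, so this citation is dispensable rather than fatal. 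In short: your route, once the formal-group exact sequence is fully justified, would prove somewhat more (bijectivity without the restriction $n\le N$), while the paper's order-counting shortcut is precisely what lets the hypothesis $n\le N$ do the work and keeps the proof self-contained.
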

\begin{proof}
	(i) 
	Consider the following commutative diagram with exact rows: 		
	\begin{equation}
		\label{eq:shatMpn}
		\vcenter{		
		\xymatrix{
		\Mhat_{n-1} \ar[r]^{p}\ar[d]^{\shat_{p^{n-1}}^M}  & 
			\Mhat_{n} \ar[r]\ar[d]^{\shat_{p^{n}}^M} & 
			\Mhat_1 \ar[d]^{\shat_{p}^M} \ar[r] & 0\,\\
		\Hhat_{n-1}\ar[r]& 
			\Hhatn \ar[r] & 
			\Hhat_1\ar[r] & 0.	
	}}
	\end{equation}
	From the assumption $n\le N$,  
	we have 
	$\Hhatn \simeq H^2(k,\mu_{p^n}^{\otimes 2})^{\oplus g} \simeq (\Z/p^n)^{\oplus g}$. 
	By counting the orders, the left lower map in \eqref{eq:shatMpn} is injective. 
	By induction on $n$,  
	it is enough to show that 
	$\shat_{p}^M:\Mhat_1 \to \Hhat_1$ is bijective. 
	In this case, we have an isomorphism 
	\begin{equation}
		\label{eq:Ehat}
		\Ahat/p \isomto \U^{\oplus g}
	\end{equation}
	of Mackey functors 	(Cor.~\ref{cor:Tak}). 
	On the other hand, we have $\Hhat_1 \simeq H^2(k,\mu_{p}^{\otimes 2})^{\oplus g}$. 
	The assertion now reduced to showing that the 
	composition 
	\[
		(\Gm/p\otimesM \U)(k) \to (\Gm/p\otimesM \Gm/p)(k) \isomto H^2(k,\mu_{p}^{\otimes 2})
	\]
	is bijective. 
	This assertion follows from \cite{Hir16}, Theorem 3.6 (i). 		

	\noindent
	(ii) 
	The Galois symbol map $s_{p^n}:K(k;\Gm,A)/p^n \to \Hn = H^2(k,\mu_{p^n}\otimes A[p^n])$ is bijective from Theorem~ \ref{thm:GmAV} and 
	$\Hn \simeq H^2(k,\mu_{p^n}^{\otimes 2})^{\oplus 2g} \simeq (\Z/p^n)^{\oplus 2g}$. 
	From the fixed commutative diagrams in \eqref{diag:isoms}, 
	we have the following commutative diagram: 
	\[
	\xymatrix@R=5mm{
	\Hhat_n \ar[r]^-{\iota} \ar[d]^{\simeq}& \Hn\ar[d]^{\simeq} \\
	H^2(k,\mu_{p^n}^{\otimes 2})^{\oplus g} \ar@{^{(}->}[r] & H^2 (k,\mu_{p^n}^{\otimes 2})^{\oplus 2g}, 
	}
	\]
	where  $\Hhatn = H^2(k,\mu_{p^n}\otimes \Ahat[p^n])$.
	As the bottom horizontal map is the inclusion map, 
	the map $\iota$ above is injective. 
	Next, 
	we consider the commutative diagram extended from \eqref{diag:MK}: 
		\[
		\xymatrix@R=4mm{
		\Mhatn \ar@{->>}[dd]\ar[rd]_-{\shat_{p^n}^M}^{\simeq}\ar[rr] & &\Mn \ar'[d]_-{\simeq}[dd] \ar[rd]^-{s_{p^n}^M} \\
		& \Hhatn \ar@{^{(}->}[rr]^(.4){\iota} & & \Hn \\ 
		\Khatn \ar@{^{(}->}[rr] & & K(k;\Gm,A)/p^n\ar[ru]_{\simeq}^{s_{p^n}}&.
		}
	\]
	Since $\shat_{p^n}^M:\Mhatn\to \Hhatn$ is bijective, 
	the left vertical map in the above diagram is bijective. 
	Therefore, the induced map $\Khatn\to \Hhatn$ is also bijective. 
	\end{proof}

	\begin{prop}
	\label{prop:Khat}
		We further assume the following condition:
		\begin{description}
		\item [\Ram] $k(\mu_{p^{N+1}})/k$ is a non-trivial and totally ramified extension. 
	\end{description} 
	Then, we have $\Mhatn \simeq \Khatn \simeq (\Z/p^{\min\set{N,n}})^{\oplus g}$ for any $n \ge 1$. 
	\end{prop}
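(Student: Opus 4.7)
The plan is to extend the bijectivity of $\shat_{p^n}^M\colon \Mhatn\to \Hhatn$ from Lemma~\ref{lem:Khat}(i) (valid for $n\le N$) to every $n\ge 1$, and then, as in Lemma~\ref{lem:Khat}(ii), to transfer the conclusion to $\Khatn$.

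The first step is a new computation of $\Hhatn$ in the range $n>N$. Under \Rat\!, the Weil pairing gives $\mu_{p^N}\subset k$, while \Ram prevents $\mu_{p^{N+1}}\subset k$; hence $\mu_{p^n}(k)=\mu_{p^{\min\set{N,n}}}$ for every $n\ge 1$. Combining this with the $G_k$-isomorphism $\Ahat[p^n]\simeq (\mu_{p^n})^{\oplus g}$ (induced from \eqref{eq:A1} together with Mazur's isomorphism $\Ahat\times_{\Ok}\Spf(O_{\kur})\simeq \Gmhat^{\oplus g}$), local Tate duality produces
\[
  \Hhatn\simeq H^2(k,\mu_{p^n}^{\otimes 2})^{\oplus g}\simeq \mu_{p^n}(k)^{\oplus g}\simeq (\Z/p^{\min\set{N,n}})^{\oplus g}.
\]

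The second step proves $\shat_{p^n}^M$ bijective for all $n$ by induction, the base $n\le N$ being Lemma~\ref{lem:Khat}(i). For $n>N$, the idea is to apply the five-lemma to the commutative diagram
\[
\xymatrix@C=3mm{
\kt\otimes\Ahat[p]\ar@{->>}[d]^{\phi}\ar[r] & \Mhat_{n-1}\ar[d]^{\shat_{p^{n-1}}^M}_{\simeq}\ar[r]^{p} & \Mhatn\ar[d]^{\shat_{p^n}^M}\ar[r] & \Mhat_1\ar[d]^{\shat_{p}^M}_{\simeq}\ar[r] & 0\\
H^1(k,\mu_p\otimes\Ahat[p])\ar[r] & \Hhat_{n-1}\ar[r]^{p} & \Hhatn\ar[r] & \Hhat_1\ar[r] & 0,
}
\]
built exactly as in the proof of Theorem~\ref{thm:GmAV} with $A$ replaced by $\Ahat$: the top row is the four-term exact sequence arising from the Mackey short exact sequence $0\to\Ahat/p^{n-1}\to\Ahat/p^n\to\Ahat/p\to 0$ (cf.~\cite{RS00}, proof of Lem.~4.2.2), and the bottom row is the corresponding long exact cohomology sequence. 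Surjectivity of $\phi$ follows from $\Ahat[p]\simeq (\mu_p)^{\oplus g}$ with trivial $G_k$-action and the Kummer isomorphism $\kt/p\isomto H^1(k,\mu_p)$. Combined with the inductive hypothesis and Lemma~\ref{lem:Khat}(i), the five-lemma forces $\shat_{p^n}^M$ bijective, so $\Mhatn\simeq (\Z/p^N)^{\oplus g}$.

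Finally, the identification $\Khatn\simeq \Mhatn$ is obtained by chasing diagram~\eqref{diag:MK}: the composition $\Mhatn\twoheadrightarrow \Khatn\hookrightarrow K(k;\Gm,A)/p^n\isomto \Hn$ factors through the now-bijective $\shat_{p^n}^M$, forcing the canonical surjection $\Mhatn\twoheadrightarrow \Khatn$ to be an isomorphism. The main technical obstacle is the inductive step for $n>N$: the left lower map $\Hhat_{n-1}\to\Hhatn$ is no longer injective (a cardinality count shows its kernel has order $p^g$), so one cannot simply reuse the three-term diagram chase of Lemma~\ref{lem:Khat}(i); one must instead construct the four-term bottom row via a Bockstein-type long exact sequence for the filtration $\mu_{p^{n-1}}\otimes \Ahat[p^{n-1}]\hookrightarrow \mu_{p^n}\otimes \Ahat[p^n]$ with cokernel $\mu_p\otimes \Ahat[p]$, and verify its compatibility with the top row under the Galois symbol maps.
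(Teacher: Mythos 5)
Your overall strategy is genuinely different from the paper's, and it contains a gap at its foundation. The critical step is your computation of $\Hhatn$ for $n>N$, which rests on the claim that $\Ahat[p^n]\simeq (\mu_{p^n})^{\oplus g}$ as \emph{$G_k$-modules}. Mazur's trivialization $\Ahat\times_{\Ok}\Spf(O_{\kur})\simeq \Gmhat^{\oplus g}$ is an isomorphism over $O_{\kur}$, so it identifies $\Ahat[p^n]$ with $(\mu_{p^n})^{\oplus g}$ only as modules over the inertia subgroup $\Gal(\kbar/\kur)$. Over $k$ itself, ordinarity gives $\Ahat[p^n]\simeq M\otimes \mu_{p^n}$ where $M$ is an \emph{unramified} twist (essentially the Cartier dual of the Frobenius action on $\Abar[p^n]$); the hypothesis $A[p^N]\subset A(k)$ only forces $M$ to be trivial modulo $p^N$, not modulo $p^n$. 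Consequently $H^2(k,\mu_{p^n}\otimes\Ahat[p^n])$ is dual to the $G_k$-invariants of $M^{\vee}\otimes\mu_{p^n}^{\otimes -1}$, and controlling these invariants requires the \emph{totally ramified} part of \Ram\!: it is the inertia action through the cyclotomic character that cuts the invariants down to the $p^N$-torsion, after which the Frobenius condition is automatic because $M/p^N$ is trivial. Your argument never invokes total ramification, only $\mu_{p^{N+1}}\not\subset k$, and indeed if $k(\mu_{p^{N+1}})/k$ were unramified the twist $M$ could conspire with the cyclotomic character to make $\Hhatn$ strictly larger than $(\Z/p^{\min\{N,n\}})^{\oplus g}$. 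So as written the first step proves something false in general; the statement you need is true under \Ram\!, but for a reason your proof does not supply. The remaining two steps (the four-column diagram chase propagating bijectivity of $\shat_{p^n}^M$, and the transfer $\Mhatn\simeq\Khatn$ via \eqref{diag:MK}) are structurally sound modulo the usual care with the coefficient sequences, but they only reduce the proposition to the computation of $\Hhatn$, which is exactly where the gap sits.

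For comparison, the paper sidesteps $\Hhatn$ for $n>N$ entirely. It uses \Ram\ through local class field theory and the Hilbert symbol to produce $y\in \Ubar_k$ with $(\zeta,y)\neq 0$ for a primitive $p^N$-th root of unity $\zeta$, and deduces that $\Mhat_1$ is generated by symbols $\set{\zeta,w}_{k/k}$. Since $\zeta^{p^n}=1$ for $n\ge N$, the transition map $\Mhat_1\onto{p^n}\Mhat_{n+1}$ in the exact sequence $\Mhat_1\to\Mhat_{n+1}\to\Mhatn\to 0$ vanishes, so the groups $\Mhatn$ stabilize at $\Mhat_N\simeq(\Z/p^N)^{\oplus g}$ (given by Lemma \ref{lem:Khat}), and the surjections $\Mhatn\surj\Khatn\surj\Khat_N$ finish the proof. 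This is both shorter and avoids any assertion about the Galois module $\Ahat[p^n]$ beyond the range $n\le N$ where it is trivial. If you want to salvage your route, you must replace the appeal to Mazur's isomorphism by an analysis of the unramified twist on $\Ahat[p^n]$ and show explicitly where total ramification enters the duality computation.
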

	\begin{proof} 
	From Lemma \ref{lem:Khat}, we may assume $n \ge N$. 
	First, we show the following claim: 
	\begin{claim}[\Cf \cite{GL18}, Proof of Thm.~3.14]
		Fix $\zeta\in \mu_{p^N}$ a primitive $p^N$-th root of unity. 
		Then, 
		$\Mhat_1 = (\Gm/p \otimesM \Ahat/p) (k)$ is generated by 
		elements of the form $\set{\zeta,w}_{k/k}$, where $w \in \Ahat(k)/p$. 
	\end{claim}
	\begin{proof}
	Recall that the Hilbert symbol $(-,-):k^{\times}\otimes k^{\times} \to \mu_p\simeq \Z/p$ satisfies  
	\begin{equation}
	\label{eq:H}
	(x,y) = 0\Leftrightarrow y \in N_{k(\sqrt[p]{x}\,)/k}\left(k(\sqrt[p]{x}\,)^{\times}\right),\quad \mbox{for $x,y\in k^{\times}$}
		\end{equation}
	(\Cf \cite{Tat76}, Prop.~4.3). 
	From the assumption \Ram\!, 
	there exists $y\in \Ubar_k =  \Im(U_k\to \kt/p)$ such that 
	$(\zeta,y) \neq 0$. 
	In fact, putting 
	$K = k(\sqrt[p]{\zeta}) = k(\mu_{p^{N+1}})$, 
	we have $U_k/N_{K/k}U_K \simeq k^{\times}/N_{K/k}K^{\times}$ (\Cf the proof of \cite{Ser68}, Chap.~V,\ Sect.~3,\ Cor.~7) and 
	local class field theory says 
	$k^{\times}/N_{K/k}K^{\times} \simeq \Gal(K/k)\neq 0$  (\Cf \cite{Ser68}, Chap.~XIII,\ Sect.~3). 
	Thus, there exists $y\in U_k \ssm N_{K/k}U_K$ such that 
	$(\zeta,y) \neq 0$ from \eqref{eq:H}. 
	As $(\zeta,y)\neq 0$, the chosen element $y$ is non-trivial in $\Ubar_k$. We use the same notation $y$ 
	as an element in $\Ubar_k$. 
	For each $1\le i \le g$, put 
	\[
	y^{(i)} := (1,\ldots , 1, \stackrel{i}{\stackrel{\vee}{y}}, 1,\ldots , 1) \in (\Ubar_k)^{\oplus g}
	\] 
	and we denote by $w^{(i)}\in \Ahat(k)/p$ 
	the element corresponding to $y^{(i)}$ through the isomorphism 
	$\Ahat(k)/p \simeq (\Ubar_k)^{\oplus g}$ as in \eqref{eq:Ehat}.
	The Galois symbol map is compatible with the Hilbert symbol map 
	(\cite{Ser68}, Chap.~XIV, Sect.~2, Prop.~5)
	as the following commutative diagram indicates: 
	\begin{equation}
	\label{diag:H}
	\vcenter{
	\xymatrix{
		\kt/p \otimesZ \Ahat(k)/p\ar[r]^-{\iota}\ar[d]^{\simeq} & \Mhat_1\ar[r]_-{\simeq}^-{\shat_{p}^M} \ar[r]& \Hhat_1 \ar[d]^-{\simeq} \\
\left(\kt/p \otimesZ \Ubar_k\right)^{\oplus g} \ar[rr]^-{(-,-)}& &(\Z/p)^{\oplus g}.
	}}
	\end{equation}
	Here, we identify 
	the isomorphism $H^2(k,\mu_p^{\otimes 2}) \simeq \Z/p$, 
	the map $\shat_p^M$ is bijective (Thm.\ \ref{thm:GmAV}),
	and the map $\iota$ is given by 
	$\iota( x\otimes w) = \set{x,w}_{k/k}$. 
	The image of $\zeta\otimes w^{(i)} \in \kt/p\otimesZ \Ahat(k)/p$ in $(\Z/p)^{\oplus g}$ 
	via the lower left corner in \eqref{diag:H} 
	is 
	\[
	\xi^{(i)} := (0,\ldots , 0, \stackrel{i}{\stackrel{\vee}{(\zeta,y)}}, 0, \ldots , 0)
	\in (\Z/p)^{\oplus g}. 
	\] 
	These elements $\xi^{(i)}$ $(1\le i \le g)$ generate $(\Z/p)^{\oplus g}$ 
	and hence the symbols $\set{\zeta,w^{(i)}}_{k/k} = \iota (\zeta \otimes w^{(i)})$ ($1\le i \le g$) 
	generate $\Mhat_1$. 
	\end{proof}
	
	There is a short exact sequence 
	\[
	\Mhat_1 \onto{p^n}  \Mhat_{n+1} \to \Mhatn \to 0, 
	\]
	where  the first map is given by the 
	multiplication by $p^n$.
	By Lemma \ref{lem:Khat}, we have $\Khat_N \simeq (\Z/p^N)^{\oplus g}$. 
	From the above claim and the induction on $n\ge N$, 
	we have $\Mhat_{n+1} \simeq \Mhatn \simeq (\Z/p^N)^{\oplus g}$. 
	The surjective homomorphisms 
	\[
	\Mhatn \surj \Khatn \surj \Khat_{N} \simeq (\Z/p^N)^{\oplus g}
	\] 
	induce $\Khatn \simeq (\Z/p^N)^{\oplus g}$. 
 \end{proof}

\begin{rem}
	In the case where $A = E$ is an elliptic curve with \Ord,\ 
	we can show Proposition 3.6 by the essentially same proof,\ under the condition 
	\begin{description}
	\item[\Ramp] $k(\Ehat[p^{N+1}])/k$ is a non-trivial extension
	\end{description}
	instead of \Ram\!. 
\end{rem}

\subsection*{Proof of the main theorem}

\begin{thm}
\label{thm:main}
	Let $A$ be an abelian variety over $k$ of dimension $g$. 
	We assume the condition \Ord, \Rat\,and\, \Ram\ (in Prop.~\ref{prop:Khat}). 
	Then, we have
	\[
	K(k;\Gm,A)_{\fin} \simeq (\Z/p^N)^{\oplus g} \oplus \Abar(\Fk).
	\]
\end{thm}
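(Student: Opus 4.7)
The plan is to compute $K(k;\Gm,A)_{\fin}$ separately at each prime, then reassemble. Since $K(k;\Gm,A)_{\fin}$ is finite, it decomposes as a direct sum of its $p$-primary and prime-to-$p$ parts, and the same is true for $\Abar(\Fk)$; it thus suffices to prove the two corresponding isomorphisms.

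For the prime-to-$p$ part, Proposition \ref{prop:Bloch2} gives $K(k;\Gm,A)/m \simeq \Abar(\Fk)/m$ for every $m$ coprime to $p$. Taking $m$ sufficiently divisible by all primes $\ell \ne p$ (up to the exponent of the finite group $\Abar(\Fk)$), the right-hand side stabilizes to $\Abar(\Fk)\{p'\}$; on the left, the maximal divisible subgroup of $K(k;\Gm,A)$ is killed modulo $m$, so the left-hand side stabilizes to $K(k;\Gm,A)_{\fin}\{p'\}$. This yields the prime-to-$p$ isomorphism.

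For the $p$-part the strategy is an order count: a finite abelian $p$-group is determined up to isomorphism by the cardinalities of its $p^m$-torsion subgroups. Because the divisible summand of $K(k;\Gm,A)$ vanishes modulo $p^m$ and $|G/p^m|=|G[p^m]|$ for any finite abelian $p$-group $G$, one has $|K(k;\Gm,A)_{\fin}\{p\}[p^m]| = |K(k;\Gm,A)/p^m|$. The short exact sequence in diagram \eqref{diag:MK}, together with Proposition \ref{prop:Khat} (giving $\Khat_m \simeq (\Z/p^{\min(m,N)})^{\oplus g}$) and Corollary \ref{cor:Kbar} (giving $\Kbar_m \simeq \Abar(\Fk)/p^m$), then yields
\[
|K(k;\Gm,A)_{\fin}\{p\}[p^m]| = p^{g\,\min(m,N)} \cdot |\Abar(\Fk)/p^m|
\]
for every $m\ge 1$. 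The same cardinality arises for the candidate group $G_0 := (\Z/p^N)^{\oplus g} \oplus \Abar(\Fk)\{p\}$, using $|\Abar(\Fk)\{p\}[p^m]| = |\Abar(\Fk)/p^m|$. Hence $K(k;\Gm,A)_{\fin}\{p\} \simeq G_0$, and combining with the prime-to-$p$ part gives the claimed decomposition.

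The main obstacle in this plan has already been absorbed into Proposition \ref{prop:Khat}, which is where the hypotheses \Ord, \Rat, and \Ram control the formal-group contribution $\Khat_m$. Once that structural identification is in hand, the potentially delicate question of whether the extension $0 \to (\Z/p^N)^{\oplus g} \to K(k;\Gm,A)_{\fin}\{p\} \to \Abar(\Fk)\{p\} \to 0$ splits is sidestepped by the order-counting argument, so no explicit section needs to be constructed.
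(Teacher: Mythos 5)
Your proposal is correct, and it rests on the same pillars as the paper's proof --- Proposition \ref{prop:Bloch2} for the prime-to-$p$ part, and the exact sequence $0\to\Khatn\to K(k;\Gm,A)/p^n\to\Kbarn\to 0$ from \eqref{diag:MK} together with Proposition \ref{prop:Khat} and Corollary \ref{cor:Kbar} for the $p$-part --- but it takes a genuinely different route at the final step. The paper proves that this exact sequence splits, by induction on $n$: for $n\le N$ the splitting comes from Theorem \ref{thm:GmAV} and Lemma \ref{lem:Khat} (there $\Khatn\simeq(\Z/p^n)^{\oplus g}$ sits as a direct summand of $K(k;\Gm,A)/p^n\simeq(\Z/p^n)^{\oplus 2g}$), and for $n>N$ the splitting is transported along the isomorphism $\Khat_{n+1}\isomto\Khatn$ supplied by Proposition \ref{prop:Khat}; the theorem then follows by taking $\plim$. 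You sidestep the splitting entirely by noting that a finite abelian $p$-group is determined by the orders of its $p^m$-torsion subgroups, and that these orders can be read off from the exact sequence by multiplicativity of cardinalities alone; this is valid (the first difference of $m\mapsto\log_p\#G[p^m]$ recovers the number of cyclic factors of length $\ge m$), and your identities $\#G[p^m]=\#G/p^m$ for finite $G$ and $\#K(k;\Gm,A)/p^m=\#K(k;\Gm,A)_{\fin}/p^m$ (the divisible summand dying mod $p^m$) are exactly right. The trade-off: the paper's argument produces an actual compatible system of splittings, which is structurally more informative, while yours is shorter and more robust since only cardinalities are needed. Both proofs lean on the same underlying decomposition $K(k;\Gm,A)=K(k;\Gm,A)_{\div}\oplus K(k;\Gm,A)_{\fin}$ with $K(k;\Gm,A)_{\fin}$ finite, so no new hypothesis is introduced.
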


\begin{proof}
	For each $m$ prime to $p$, 
	we have $K(k;\Gm,A)/m\simeq \Abar(\F)/m$ (Prop.~\ref{prop:Bloch2}). 
	
	\begin{claim}
		 $K(k;\Gm,A)/p^n \simeq \Khatn \oplus \Kbarn$ for each $n\ge 1$. 
	\end{claim}
	\begin{proof}
		The assertion is true for $n\le N$ by Theorem~\ref{thm:GmAV} and Lemma \ref{lem:Khat}. 
		For $n>N$, consider the following diagram:
		\[
		\xymatrix{
		0 \ar[r]& \Khatn \ar[r] & K(k;\Gm,A)/p^n \ar[r] & \Kbarn \ar[r] & 0 \\
		0 \ar[r]& \Khat_{n+1} \ar[u]\ar[r] & K(k;\Gm,A)/p^{n+1} \ar[u]\ar[r] & \Kbar_{n+1} \ar[u]\ar[r] & 0. 
		}
		\]
		By the induction hypothesis, the top sequence splits. 
		From Proposition~\ref{prop:Khat}, the left vertical map is bijective 
		so that the lower sequence also splits. 
	\end{proof}
	From 
	Cororally \ref{cor:Kbar} and Proposition \ref{prop:Khat}, 
	we obtain 
	\[
	K(k;\Gm,A)/p^n \simeq (\Z/p^{\min\set{n,N}})^{\oplus g} \oplus \Abar(\Fk)/p^n
	\]
	for any $n$.
	By taking the limit $\plim$, we have
	$K(k;\Gm,A)_{\fin} \simeq (\Z/p^N)^{\oplus g}\oplus \Abar(\F)$.
\end{proof}

Applying the above theorem to the Jacobian variety $\Jac(X)$, 
we obtain the following corollary as noted in Introduction.
\begin{cor}
	\label{cor:main}
	Let $X$ be a projective smooth curve over $k$ with 
	$X(k)\neq \emptyset$.  
	We assume the conditions 
	\Rat,\ \Ord,\ and \Ram\ for the Jacobian variety $J = \Jac(X)$ associated to $X$. 
	Then, we have 
	\[
	\VXfin \simeq (\Z/p^{N})^{\oplus g} \oplus \ol{J}(\Fk), 
	\]
	where $g = \dim J$ and $N = \max\set{ n | J[p^n] \subset J(k)}$.
\end{cor}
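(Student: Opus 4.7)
The plan is to derive Corollary~\ref{cor:main} as a direct specialization of Theorem~\ref{thm:main} applied to $A = J = \Jac(X)$, combined with the Somekawa-type identification $V(X) \simeq K(k;\Gm,J)$ recalled in \eqref{eq:VX}.

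First, I would observe that the hypothesis $X(k) \neq \emptyset$ is precisely what activates the isomorphism $V(X) \simeq K(k;\Gm,J)$ from \eqref{eq:VX}. Since this is an isomorphism of abelian groups, it automatically preserves the intrinsic decomposition into the maximal divisible subgroup and its finite complement; in particular, $V(X)_{\fin}$ is identified with $K(k;\Gm,J)_{\fin}$, where the latter denotes the finite part produced by the limiting process at the end of the proof of Theorem~\ref{thm:main}.

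Next, I would verify that the conditions \Rat, \Ord, \Ram\ assumed for $J$ in the corollary are exactly the hypotheses of Theorem~\ref{thm:main} with $A = J$ (so that $g = \dim J$ and $N = \max\{n \mid J[p^n] \subset J(k)\}$ match the quantities appearing there). Invoking Theorem~\ref{thm:main} then yields
\[
K(k;\Gm,J)_{\fin} \simeq (\Z/p^N)^{\oplus g} \oplus \ol{J}(\Fk),
\]
and transporting across $V(X) \simeq K(k;\Gm,J)$ gives the asserted description of $V(X)_{\fin}$.

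There is no genuine obstacle: all of the substantive work has already been carried out in Theorem~\ref{thm:main}. The only subtlety worth noting is that the corollary does \emph{not} assume good reduction of $X$ itself, only good ordinary reduction of $J$. This is harmless, because the decomposition $V(X) = V(X)_{\div} \oplus V(X)_{\fin}$ is supplied in full generality by Theorem~\ref{thm:cft}(iii)--(iv), and everything on the $K$-theoretic side only uses the good (ordinary) reduction of $J$ via the hypotheses of Theorem~\ref{thm:main}.
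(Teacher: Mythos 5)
Your proposal is correct and matches the paper's (essentially one-line) derivation: the paper likewise obtains Corollary~\ref{cor:main} by applying Theorem~\ref{thm:main} to $A = J = \Jac(X)$ and transporting the result through the identification $V(X) \simeq K(k;\Gm,J)$ of \eqref{eq:VX}, which is available because $X(k)\neq\emptyset$. Your additional remarks on the compatibility of the divisible/finite decomposition and on not needing good reduction of $X$ itself are accurate and consistent with Theorem~\ref{thm:cft}.
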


\appendix

\section{Albanese kernel}

In this appendix, 
we compute the Albanese kernel for the product of curves over $k$ 
as a generalization of \cite{Tak11}.  
Let $A$ be an abelian variety over $k$ assuming 
\Ord and \Rat. 
For $n\ge 1$ with $A[p^n]\subset A(k)$, 
we choose an isomorphism 
\[
	A[p^n] \isomto (\mu_{p^n})^{\oplus 2g}
\]
of (trivial) Galois modules 
as in \eqref{eq:isom}. 
By the same proof as in Prop \ref{prop:Tak}, 
the image of the Kummer map $\delta_A: A(k)/p^n \to H^1(k,A[p^n]) \simeq H^1(k,\mu_{p^n}^{\oplus 2g}) \simeq (\kt/p^n)^{\oplus 2g}$ is 
determined as follows: 
\begin{prop}
For any $n \ge 1$ with $A[p^n] \subset A(K)$ and a finite extension $K/k$, we have
	\[
	A(K)/p^n \isomto (\Ubar_K)^{\oplus g} \oplus \Ker\left(j:\Kt/p^n\to \Kurt/p^n\right)^{\oplus g}, 
	\] 
	where $j$ is the map induced from the inclusion $\Kt \inj \Kurt$. 
\end{prop}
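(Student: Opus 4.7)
The plan is to mimic the proof of Proposition~\ref{prop:Tak} with $p$ replaced by $p^n$, exploiting the fact that both of the key ingredients — the formal-group identification and the Mattuck-style cardinality count — generalise cleanly.

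First I would fix an isomorphism $A[p^n] \isomto (\mu_{p^n})^{\oplus 2g}$ of trivial $G_K$-modules compatible with a choice of $A_1[p^n] \simeq (\mu_{p^n})^{\oplus g}$ sitting inside as $(\Id,1)$, exactly as in \eqref{eq:isom} and \eqref{diag:isoms}; this is legitimate because the assumption $A[p^n]\subset A(K)$ gives $A_1[p^n]\subset A_1(K)$ and the identification $\Ahat\times_{O_k}\Spf(O_{\kur}) \simeq (\Gmhat)^{\oplus g}$ yields $A_1[p^n]\simeq (\mu_{p^n})^{\oplus g}$. The Kummer map then takes the form $\delta_A^K \colon A(K)/p^n \inj (\Kt/p^n)^{\oplus 2g}$, and the analogous map for $A_1$ factors through the first $g$ copies.

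Next, to prove the containment $\Im(\delta_A^K) \subseteq (\Ubar_K)^{\oplus g}\oplus \Ker(j)^{\oplus g}$, I would follow the two-step argument of Lemmas \ref{lem:Tak1} and \ref{lem:Tak2}. Over $\Kur$, the identification $A_1(\Kur)\simeq \Gmhat(\Kur)^{\oplus g}=(U^1_{\Kur})^{\oplus g}$ is contained in $O_{\Kur}^\times$, so composing with the valuation kills $\Im(\delta_{A_1}^{\Kur})$; hence $\Im(\delta_{A_1}^{\Kur})\subset (\Ubar_{\Kur})^{\oplus g}$. Since $\ol{\F}_K$ is algebraically closed, $\Abar(\ol{\F}_K)/p^n=0$ and therefore $A_1(\Kur)/p^n \surj A(\Kur)/p^n$, which propagates the containment to $\Im(\delta_A^{\Kur})\subset (\Ubar_{\Kur})^{\oplus g}\hookrightarrow (\Kurt/p^n)^{\oplus 2g}$ via $(\Id,1)$. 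Pulling back along the commutative square with vertical maps $j$, we see that the last $g$ components of any element in $\Im(\delta_A^K)$ must lie in $\Ker(j)$, while the first $g$ components are forced to be unramified (zero valuation) and hence lie in $\Ubar_K$.

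Finally, I would close the argument by counting. Mattuck's theorem together with $A[p^n]\subset A(K)$ gives $A(K)/p^n \simeq (\Z/p^n)^{\oplus g([K:\Qp]+2)}$, an abelian group of order $p^{ng([K:\Qp]+2)}$. On the other hand, since $\mu_{p^n}\subset K$, the structure $U_K\simeq \mu_{p^\infty}(K)\oplus \Z_p^{[K:\Qp]}$ (mod prime-to-$p$) yields $|\Ubar_K| = |U_K/p^n| = p^{n([K:\Qp]+1)}$; and inflation-restriction identifies $\Ker(j)\simeq H^1(\Kur/K,\mu_{p^n}) \simeq \Hom_{\mathrm{cont}}(\Zhat,\mu_{p^n})\simeq \mu_{p^n}$, of order $p^n$. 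Hence the target group has order $p^{ng([K:\Qp]+1)}\cdot p^{ng} = p^{ng([K:\Qp]+2)}$, matching the source, and the injection $\delta_A^K$ becomes an isomorphism onto $(\Ubar_K)^{\oplus g}\oplus \Ker(j)^{\oplus g}$.

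The only place where one must be careful is the compatibility of the isomorphism $A[p^n]\simeq (\mu_{p^n})^{\oplus 2g}$ with the filtration coming from $A_1$ — this is what pins down \emph{which} $g$ copies of $\Kt/p^n$ accommodate $\Ubar_K$ versus $\Ker(j)$. Once this bookkeeping is matched with the compatible diagrams in \eqref{diag:isoms}, the proof reduces to the valuation computation plus the Mattuck cardinality count, both of which are insensitive to raising the power of $p$.
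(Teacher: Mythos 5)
Your proposal is correct and is essentially the paper's own argument: the paper proves this appendix proposition simply by invoking ``the same proof as in Prop.~\ref{prop:Tak}'', and your writeup is precisely that proof of Proposition~\ref{prop:Tak} (the containment via Lemmas~\ref{lem:Tak1}--\ref{lem:Tak2} over $\Kur$, then the Mattuck cardinality count) carried out with $p$ replaced by $p^n$. The bookkeeping point you flag at the end is exactly the compatibility already arranged in \eqref{eq:isom} and \eqref{diag:isoms}, so nothing further is needed.
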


The above isomorphism is extend to the isomorphism 
\[
	A/p^n \simeq \U^{\oplus g} \oplus \V^{\oplus g}
\]
of Mackey functors, 
where $\U,\V$ is the sub Mackey functors of $\Gm/p^n$ defined by 
	\[
	\U(K) := \Im(U_K\to K^{\times}/p^n),\quad \V(K) := \Ker\left(j:\Kt/p^n \to \Kurt/p^n\right), 
	\]
	for any finite extension $K/k$ (\Cf Cor.~\ref{cor:Tak}). 
Now we use the following notation: 
\begin{itemize}
	\item $X_i$\,: smooth projective curves over $k$ with 
$k$-rational point $X_i(k)\neq \emptyset$ for $i=1,2$, 
	\item $J_i := \Jac(X_i)$: the Jacobian variety associated to $X_i$ of dimension $g_i$. 
\end{itemize}
The kernel of the degree map $\deg:\CH(X_1\times X_2)\to \Z$ 
is denoted by $A_0(X_1\times X_2)$. 
The kernel $T(X_1\times X_2)$ of the Albanese map 
\[
\operatorname{alb} : A_0(X_1 \times X_2) \to \operatorname{Alb}_{X_1\times X_2}(k) = J_1(k)\oplus J_2(k)
\] 
which is called the \textbf{Albanese kernel} 
is also written by the Somekawa $K$-group as 
\[
T(X_1\times X_2) \simeq K(k;J_1,J_2) 
\]
(\cite{RS00}). 
From the same computation as in \cite{Tak11}, Theorem 4.1, 
we recover \cite{Gaz18}, Corollary 8.9. 
\begin{cor}
Assume that $J_1,J_2$ satisfy \Ord 
and all $p^n$-torsion points of $J_1$ and $J_2$ are $k$-rational. Then, we have 
\[
T(X_1\times X_2)/p^n \simeq (\Z/p^n)^{\oplus g_1g_2},
\]
where $g_i = \dim J_i$.
\end{cor}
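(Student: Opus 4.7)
The plan is to reduce to the two-elliptic-curve computation of \cite{Tak11}, Theorem~4.1, by exploiting the Mackey functor splitting of $J_i/p^n$ established in the proposition immediately above this corollary. Under the hypotheses \Ord\ and $J_i[p^n] \subset J_i(k)$ for $i = 1, 2$, that proposition (upgraded to a statement about Mackey functors exactly as in Corollary \ref{cor:Tak}) gives
\[
J_i/p^n \simeq \U^{\oplus g_i} \oplus \V^{\oplus g_i}
\]
as Mackey functors, where $\U$ and $\V$ are the sub-Mackey functors of $\Gm/p^n$ recalled just above the statement.

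Using the Somekawa identification $T(X_1 \times X_2) \simeq K(k; J_1, J_2)$ from \cite{RS00} together with the bilinearity of the Mackey product, I would expand
\[
K(k; J_1, J_2)/p^n \simeq \bigl((J_1/p^n) \otimesM (J_2/p^n)\bigr)(k) / (\mathrm{WR})
\]
as a direct sum indexed by pairs $(i, j)$ with $1 \le i \le g_1$ and $1 \le j \le g_2$, where $(\mathrm{WR})$ denotes the Weil reciprocity relation built into the Somekawa quotient. Each summand is the Somekawa-type quotient of $\bigl((\U \oplus \V) \otimesM (\U \oplus \V)\bigr)(k)$ by $(\mathrm{WR})$.

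Each such block is precisely the object computed in the two-elliptic-curve case of \cite{Tak11}, Theorem~4.1, which yields $\Z/p^n$ per block. The essential input is the Mackey structure of $\U$ and $\V$ together with the Hiranouchi identifications $(\Gm/p^n \otimesM \U)(k) \simeq (\Gm/p^n \otimesM \V)(k) \simeq H^2(k, \mu_{p^n}^{\otimes 2}) \simeq \Z/p^n$ that also drove Theorem \ref{thm:GmAV}. Summing over all $g_1 g_2$ pairs gives
\[
T(X_1 \times X_2)/p^n \simeq (\Z/p^n)^{\oplus g_1 g_2}.
\]

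The main obstacle is to verify that the block decomposition is compatible with the Weil reciprocity relation. Because $J_i/p^n \simeq \U^{\oplus g_i} \oplus \V^{\oplus g_i}$ is an isomorphism of Mackey functors (and not merely of underlying abelian groups), and because the Somekawa relations are bilinear in their two arguments, the relations factor block by block; this structural observation is what allows the reduction to Takemoto's one-dimensional computation to proceed verbatim.
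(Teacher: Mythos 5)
There is a genuine gap at exactly the step you flag as ``the main obstacle,'' and the justification you offer for it does not work. The Somekawa $K$-group is not a functor of the underlying Mackey functors alone: the relations defining the quotient $(J_1\otimesM J_2)(k)\surj K(k;J_1,J_2)$ (Weil reciprocity) are formulated using points of $J_1$ and $J_2$ over function fields $k(C)$ of curves over $k$ together with their specializations at closed points, whereas the splitting $J_i/p^n\simeq \U^{\oplus g_i}\oplus\V^{\oplus g_i}$ is an isomorphism of Mackey functors established only on finite extensions $K/k$ (it comes from the Kummer-map computation of the preceding proposition). Bilinearity in the two arguments does not save you: a single reciprocity relation involves elements of $J_i(k(C))$, where no block decomposition is available, so there is no reason the relation subgroup should be the direct sum of its intersections with the $g_1g_2$ blocks. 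Consequently the proposed blockwise description of $K(k;J_1,J_2)/p^n$ as a sum of ``Somekawa-type quotients'' of $\bigl((\U\oplus\V)\otimesM(\U\oplus\V)\bigr)(k)$ is not even well defined, and the verbatim reduction to \cite{Tak11}, Theorem~4.1 does not go through.

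The paper sidesteps this entirely by a sandwich argument: the Mackey product $(J_1/p^n\otimesM J_2/p^n)(k)$ surjects onto $K(k;J_1,J_2)/p^n$, and the Galois symbol map $s_{p^n}\colon K(k;J_1,J_2)/p^n\to H^2(k,J_1[p^n]\otimes J_2[p^n])$ is \emph{injective} by \cite{RS00}, Rem.~4.5.8~(b). Hence $K(k;J_1,J_2)/p^n$ is identified with the image of $s_{p^n}^M$ on the Mackey product, and it is \emph{that} image which decomposes blockwise. The blocks are then computed via the Hilbert symbol: each $(\U\otimesM\U)(k)$ contributes $\Z/p^n$ (\cite{Tak11}, Prop.~2.5), while $(\U\otimesM\V)(k)$, $(\V\otimesM\U)(k)$ and $(\V\otimesM\V)(k)$ map to $0$, giving $(\Z/p^n)^{\oplus g_1g_2}$ in total. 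Your argument becomes correct once you insert the Raskind--Spiess injectivity as the key input and replace the block-by-block quotient computation with this computation of the image of the symbol map.
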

\begin{proof}
	From $K(k;J_1,J_2) \simeq T(X_1\times X_2)$, 
	it is enough to show 
	$K(k;J_1,J_2)/p^n \simeq (\Z/p^n)^{\oplus g_1g_2}$. 
	We fix isomorphisms $J_i[p^n] \simeq (\mu_{p^n})^{\oplus 2g_i}$ 
	of (trivial) Galois modules as above. 
	We have 
	\[
	J_1/p^n\otimesM J_2/p^n \simeq 
	\left( (\U\otimesM \U) 
	\oplus (\U\otimesM \V) 
	\oplus (\V\otimesM \U)
	\oplus (\V\otimesM \V)
	\right)^{\oplus g_1g_2}.	
	\]
	The Galois symbol maps give the following commutative diagram: 
	\begin{equation}
		\label{eq:J}
	\vcenter{
	\xymatrix@R=1mm@C=5mm{
	(J_1/p^n\otimesM J_2/p^n)(k)\ar@/^30pt/[rr]^{s_{p^n}^M}\ar[dd]^{\simeq}  \ar@{->>}[r]&  K(k;J_1,J_2)/p^n \ar@{^{(}->}[r]^-{s_{p^n}} & H^2(k,J_1[p^n]\otimes J_2[p^n]) \ar[dd]^{\simeq} \\ 
	& \\ 
	(\U\otimesM \U)(k)^{\oplus g_1g_2} \ar@{}@<-2ex>[d]^{\bigoplus} & &
	H^2(k,\mu_{p^n}^{\otimes 2})^{\oplus 4g_1g_2}\\ 
	 (\U\otimesM \V)(k)^{\oplus 2g_1g_2}\ar@{}@<-2ex>[d]^{\bigoplus}\ar@/_10pt/[rru]^{s_{p^n}^M}  & \\
	(\V\otimesM \V)(k) ^{\oplus g_1g_2}.
	}}
	\end{equation}
	Here, 
	$s_{p^n}:K(k;J_1,J_2)/p^n \to H^2(k,J_1[p^n]\otimes J_2[p^n])$ is injective (\cite{RS00}, Rem.~4.5.8 (b)), 
	and 
	the bottom map 
	is given by composing 
	$s_{p^n}^M:(\Gm/p^n\otimesM \Gm/p^n)(k)\to H^2(k,\mu_{p^n}^{\otimes 2})$ (Def.~\ref{def:symbol}) 
	after the natural map 
	$(\U\otimesM \U)(k) \to (\Gm/p^n\otimesM \Gm/p^n)(k)$, 
	$(\U\otimesM \V)(k) \to (\Gm/p^n\otimesM \Gm/p^n)(k)$, 
	or $(\V\otimesM \V)(k) \to (\Gm/p^n\otimesM \Gm/p^n)(k)$. 
	The Galois symbol map is written by the Hilbert symbol (\cite{Ser68}, Chap.~XIV, Sect.~2, Prop.~5) as follows:  
	
	\[
	\entrymodifiers={!! <0pt, .8ex>+}
	\xymatrix@R=3mm{
	(\U\otimesM \U)(k) \ar[r] & (\Gm/p^n\otimesM \Gm/p^n)(k)\ar[r]^-{s_{p^n}^M} & H^2(k,\mu_{p^n}^{\otimes 2}) \\ 
	\displaystyle \bigoplus_{K/k} \U(K) \otimesZ \U(K) \ar[r] \ar@{->>}[u]\ar@{-->}@/_30pt/[rrd] & \displaystyle \bigoplus_{K/k} K^{\times}/p^n \otimesZ K^{\times}/p^n \ar[r]^-{\delta \cup \delta }\ar@{->>}[u]\ar@/_10pt/[dr]^-{(-, -)} & \displaystyle \bigoplus_{K/k} H^2(K,\mu_{p^n}^{\otimes 2})\ar[u]^{\Cor_{K/k}}\\ 
	& & \displaystyle \bigoplus_{K/k} \Z/p^n\ar[u]_-{\simeq}.
	}
	\]
	For each finite extension $K/k$, the image of 
	$\U(K)\otimes_{\Z} \U(K)$ by the Hilbert symbol 
	(the dotted arrow in the above diagram) is isomorphic to 
	$\Z/p^n$ (\cite{Tak11}, Prop.~2.5) so that 
	\[
	s_{p^n}^M\left((\U\otimesM\U)(k)\right) \simeq \Z/p^n.
	\]
	On the other hand, 
	the image of 
	$\U(K)\otimes_{\Z} \V(K)$ and $\V(K)\otimes_{\Z} \V(K)$ 
	by the Hilbert symbol is trivial 
	and hence 
	\[
	s_{p^n}^M\left((\U\otimesM\V)(k)\right) = 
	s_{p^n}^M\left((\V\otimesM\V)(k)\right) = 0.
	\]
	From the above diagram \eqref{eq:J}, 
	these computations imply 
	\[
	s_{p^n}^M\left((J_1/p^n\otimesM J_2/p^n)(k)\right)  \simeq K(k;J_1,J_2)/p^n \simeq (\Z/p^n)^{\oplus g_1g_2}.
	\]
\end{proof}

\def\cprime{$'$}
\providecommand{\bysame}{\leavevmode\hbox to3em{\hrulefill}\thinspace}
\providecommand{\MR}{\relax\ifhmode\unskip\space\fi MR }
\providecommand{\MRhref}[2]{%
  \href{http://www.ams.org/mathscinet-getitem?mr=#1}{#2}
}
\providecommand{\href}[2]{#2}

\end{document}